\newcommand\boxe{\begin{tabular}{|p{0,1cm}|}
\hline \\ \hline \end{tabular}}
\newcommand\boxur{\begin{tabular}{p{0,1cm}|}
\hline \\  \end{tabular}}
\newcommand{\acts}{{\rightharpoondown}}
\newcommand\id{\operatorname{id}}
\newcommand\idd{\mathbf{id}}
\newcommand{\G}{{\mathcal G}}
\newcommand{\D}{{\mathcal D}}
\newcommand{\Ec}{{\mathbf E}}
\newcommand\tto{\rightrightarrows}
\newcommand{\B}{{\mathcal B}}
\newcommand{\Hc}{{\mathcal H}}
\newcommand{\Vc}{{\mathcal V}}
\newcommand{\Pc}{{\mathcal P}}
\newcommand{\Oc}{{\mathcal O}}
\numberwithin{equation}{section}\theoremstyle{plain}
\newtheorem{theorem}{Theorem}[section]
\newtheorem{lema}[theorem]{Lemma}
\newtheorem{prop}[theorem]{Proposition}
\theoremstyle{definition}
\newtheorem{definition}[theorem]{Definition}
\theoremstyle{remark}
\newtheorem{obs}[theorem]{Remark}
\newcommand\iddv{\mathbf{id}}
\def\pf{\begin{proof}}
\def\epf{\end{proof}}
\theoremstyle{remark}
\def\h4n{\hspace{-0.4cm}}
\def\N{I\hspace{-0.8ex} N}
\def\[{|\hspace{-0.2ex} [}
\def\]{]\hspace{-0.2ex} |}
\renewcommand\h{\frak h}
\newcommand\pfibrado[2]{\;{}_{#1}\hspace{-2pt}\times_{#2}\;} 
 \newcommand\caja[5]{
 \begin{tabular}{c}
 \begin{picture}(50,60)(0,0) 
 \put(5,8){\framebox(40,40){#1}}
 \put(25,55){\makebox(0,0)[c]{${\scriptstyle \text{#2}}$}} 
 \put(48,28){\makebox(0,0)[l]{${\scriptstyle \text{#3}}$}} 
 \put(25,0){\makebox(0,0)[c]{${\scriptstyle \text{#4}}$}}  
 \put(3,28){\makebox(0,0)[r]{${\scriptstyle \text{#5}}$}} 
 \end{picture}
 \end{tabular}
 }
\newcommand\cajaMedium[5]{
 \begin{tabular}{c}
 \begin{picture}(37,50) 
 \put(3,8){\framebox(30,30){#1}}
 \put(18,43){\makebox(0,0)[c]{${\scriptstyle \text{#2}}$}} 
 \put(37,23){\makebox(0,0)[l]{${\scriptstyle \text{#3}}$}} 
 \put(18,1){\makebox(0,0)[c]{${\scriptstyle \text{#4}}$}}  
 \put(1,23){\makebox(0,0)[r]{${\scriptstyle \text{#5}}$}} 
 \end{picture}
 \end{tabular}
 }
\newcommand\cajaSmall[5]{
 \begin{tabular}{c}
 \begin{picture}(25,40) 
 \put(3,8){\framebox(20,20){#1}}
 \put(11,32){\makebox(0,0)[c]{${\scriptstyle \text{#2}}$}} 
 \put(25,18){\makebox(0,0)[l]{${\scriptstyle \text{#3}}$}} 
 \put(12,2){\makebox(0,0)[c]{${\scriptstyle \text{#4}}$}}  
 \put(0,18){\makebox(0,0)[r]{${\scriptstyle \text{#5}}$}} 
 \end{picture}
 \end{tabular}
 }
\begin{document}

\renewcommand{\baselinestretch}{1.2}
\thispagestyle{empty}

\title[Double Lie groupoids]{On slim double Lie groupoids}
\author[Andruskiewitsch, Ochoa and Tiraboschi]{Nicol\'as Andruskiewitsch}
\address{\noindent
Facultad de Matem\'atica, Astronom\'\i a y F\'\i sica, Universidad
Nacional de C\'ordoba.  CIEM -- CONICET. (5000) Ciudad
Universitaria, C\'ordoba, Argentina}
 \email{andrus@mate.uncor.edu,  \emph{URL:}\/ http://www.mate.uncor.edu/andrus}
 \author[]{Jesus Alonso Ochoa Arango}
 \email{jeochoa@famaf.unc.edu.ar}
\author[]{Alejandro Tiraboschi}
 \email{tirabo@famaf.unc.edu.ar}
 \thanks{This work was partially supported by CONICET, Fundaci\' on
Antorchas, Agencia C\'ordoba Ciencia, ANPCyT    and Secyt (UNC)}
\subjclass{20L05; 18D05}
\date{\today}
\begin{abstract} We prove that
every slim double Lie  groupoid with proper core action is
completely determined by a factorization of a certain canonically
defined "diagonal" Lie groupoid.
\end{abstract}

\maketitle

\section*{Introduction}

A double groupoid is a set $\B$ provided with two different
but compatible grou-poid structures.
It is useful to represent the elements of $\B$ as boxes that merge
horizontally or vertically according to the
groupoid multiplication into consideration.
The vertical (respectively horizontal) sides of a box belong
to another groupoid $\Vc$ (resp. $\Hc$).
A double groupoid is \emph{slim} if any box is determined by its four sides.
The notion of double groupoids was introduced by Ehresmann \cite{ehr},
and later studied in \cite{brown, bj, BM, bs} and references therein.

The notion of double Lie groupoid was defined and investigated by
K. Mackenzie \cite{mk1, mk2}; see also \cite{p, mk3, wl}
for applications to differential and Poisson geometry.
In particular the question of the classification of double
Lie groupoids was raised in \cite{mk1}, see also \cite{BM}.
In the latter article, a complete answer was given in the restricted
case of locally trivial double Lie groupoids.  More recently, a
description in two stages of discrete double groupoids was given
in \cite{AN3}. To state them, let us recall that a diagram
over a pair of groupoids $\Vc$ and $\Hc$ is a triple $(\D, j, i)$
where $\D$ is a groupoid  and $i: \Hc \to \D$, $j: \Vc
\to \D$ are morphisms of groupoids (over a fixed set of points).
The stages in  \cite{AN3} are:
\begin{enumerate}
 \item[(a)] Any double groupoid is an extension of slime double
 groupoid (its \emph{frame}) by an abelian group bundle.
\item[(b)] The category of slim double groupoids, with fixed vertical
and horizontal goupoids $\Vc$ and $\Hc$, satisfying the filling condition,
is equivalent to the category of diagrams over $\Vc$ and $\Hc$.
\end{enumerate}

In this paper,  we extend stage (b) to the setting of double Lie
groupoids. In this context, instead of the filling condition,
one requires that the double source map is a surjective submersion \cite{mk1}.
As one may naturally expect, there are some topological and
geometrical ingredients in our main Theorem \ref{categoryequivalence}, which says:

\smallbreak
\emph{The category of slim double Lie groupoids, with fixed vertical
and horizontal Lie groupoids $\Vc$ and $\Hc$, and proper core action,
is equivalent to the category of diagrams of Lie groupoids $(\D, j, i)$
such that the maps $j$ and $i$ are transversal at the identities.}

Our proof of this theorem relies on \cite[Theorem 2.8]{AN3} and some
topological and differentiable considerations such as properness
of the \emph{core action} on one side and a transversality
condition on the morphisms involved in a diagram of Lie groupoids on the other.
It is also possible to adjust stage \emph{(a)} to the context
of double Lie groupoids but we postpone the investigations to a
future paper.

\section{Preliminaries on Lie groupoids an double Lie groupoids}\label{prels}

We denote a groupoid in the form $ \xymatrix{\G \ar@<2pt>[r]^{s}
\ar@<-2pt>[r]_{e} &\Pc}$, where $s$ and $e$ stand for `source' and
`end' respectively; and the identity map is $\id: \Pc \to \G$.
Recall that a groupoid $ \xymatrix{\G \ar@<2pt>[r]^{s}
\ar@<-2pt>[r]_{e} & \Pc} $ is a \textit{Lie groupoid} \cite{mk4}, if $\Pc$
and $\G$ are smooth manifolds, $s$ and $e$ are surjective
submersions and the other structural maps are smooth. The {\em
anchor} of $\G$   is the map $\chi:\G \to \Pc \times \Pc$ given by
$\chi(g) = (s(g),e(g) )$.

\medbreak

We recall the following well known definition.
\begin{definition}
A {\em left action} of a groupoid $\G \tto \Pc $ {\em along}
a map $\epsilon:N \to \Pc$ is given by a map
$ G \pfibrado{e}{\epsilon}\N \to N$, denoted by $(g,n) \mapsto gn$,
which satisfies the following identities:
$$\epsilon(hy) = s(h),\quad \id(\epsilon(y))\; y = y, \quad (gh)y = g(hy),$$
for all $g,h \in \G$ and $y \in N$ such that $e(g) = s(h)$
and $e(h) = \epsilon(y)$.

The \emph{transformation} or \emph{action groupoid}
$\G \ltimes N \rightrightarrows N$, associated with such
an action, is the groupoid with set of arrows $\G \pfibrado{e}{\epsilon} N$
and base $N$. The source and target maps are
$$ s',\; e': \G \ltimes N \to N, \quad \text{given by} \quad s'(g, n) = gn
\quad \text{and} \quad e'(g, n) = n,$$
respectively, and composition $(g, n)(h, m) = (gh, m)$.

\end{definition}

\begin{obs}
If in the above definition $\G \tto \Pc $ is a Lie groupoid, $N$ a smooth
manifold and $\epsilon:N \to \Pc$ a smooth map, we define a left action
by the same properties and the only extra requirement  is the smoothness of the map that
gives the action. The resulting action groupoid is again a Lie groupoid.

\end{obs}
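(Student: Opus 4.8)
The statement to prove is the final Remark: if $\G \tto \Pc$ is a Lie groupoid, $N$ a smooth manifold, and $\epsilon: N \to \Pc$ a smooth map, then the action groupoid $\G \ltimes N \tto N$ built from a smooth left action is again a Lie groupoid. The strategy is to verify each clause in the definition of a Lie groupoid in turn: that the arrow set and base are smooth manifolds, that the source and target maps are surjective submersions, and that the remaining structure maps (identity, inversion, multiplication) are smooth. Most of these reduce to routine checks, so the proof is essentially a matter of organizing the verifications and isolating the single point that requires real work.

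\textbf{First steps (the manifold structure and smoothness of structure maps).} First I would observe that the base $N$ is a smooth manifold by hypothesis. The arrow manifold is the fibered product $\G \pfibrado{e}{\epsilon} N$, so the crux of endowing it with a smooth structure is to know this fibered product is a manifold. Since $\G \tto \Pc$ is a Lie groupoid, the target map $e: \G \to \Pc$ is a surjective submersion; hence the fibered product $\G \pfibrado{e}{\epsilon} N$ of the submersion $e$ with the arbitrary smooth map $\epsilon$ is automatically transversal, and the standard transversality result for fibered products guarantees that $\G \pfibrado{e}{\epsilon} N$ is an embedded submanifold of $\G \times N$. Once this is in hand, the identity map $N \to \G \ltimes N$, $n \mapsto (\id(\epsilon(n)), n)$, is smooth because $\id$, $\epsilon$, and the inclusion are all smooth; inversion is smooth because the groupoid inversion of $\G$ is smooth and the action map is smooth (the inverse of $(g,n)$ is $(g^{-1}, gn)$, and $gn$ depends smoothly on $(g,n)$); and composition $(g,n)(h,m) = (gh, m)$ is smooth because the multiplication of $\G$ is smooth. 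Each of these is a composition of known smooth maps restricted to the submanifold, so smoothness follows directly.

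\textbf{The target map and the main obstacle.} The target map $e'(g,n) = n$ is just the restriction of the projection $\G \times N \to N$ to the submanifold $\G \ltimes N$; it is surjective since every $n$ is hit by $(\id(\epsilon(n)), n)$, and it is a submersion because its fibers are diffeomorphic copies of $e$-fibers of $\G$ (fixing $n$ leaves $g$ ranging over $e^{-1}(\epsilon(n))$). The genuinely delicate clause is that the source map $s'(g,n) = gn$ is a surjective submersion; this is the one step where the Lie-groupoid hypotheses on $\G$ must be used in an essential way rather than formally. The cleanest route is to note that $s'$ factors as the action map $\G \pfibrado{e}{\epsilon} N \to N$, and to show this is a submersion one uses the groupoid axioms: the map $(g,n) \mapsto (gn, n)$ is a diffeomorphism of $\G \ltimes N$ with inverse given by the action of $g^{-1}$, because the action axioms $\id(\epsilon(n))\,n = n$ and $(gh)n = g(hn)$ make the action invertible fiberwise. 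Composing this diffeomorphism with the projection to the first factor, which is a submersion onto its image, yields that $s'$ is a submersion; surjectivity is immediate since $s'(\id(\epsilon(n)), n) = n$. I expect this verification that the source map is a submersion to be the main point, since it is precisely where the interplay between the submersion property of $e$ and the invertibility built into the action axioms comes together; the remaining items are formal consequences of the smoothness hypotheses and the transversality giving the manifold structure.
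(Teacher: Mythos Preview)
The paper gives no proof of this remark; it is stated as a standard fact about action groupoids and left to the reader. Your verification is therefore more than the paper supplies, and it is essentially correct, with one genuine slip.

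In your argument that the source map $s'$ is a submersion, the map $(g,n)\mapsto(gn,n)$ is \emph{not} a diffeomorphism of $\G\ltimes N$ onto anything useful: its values lie in $N\times N$, and from the pair $(gn,n)$ you cannot recover $g$ unless the action is free, so there is no inverse ``given by the action of $g^{-1}$'' in general. The map you actually want is
\[
\Psi:\ \G\pfibrado{e}{\epsilon}N\ \longrightarrow\ \G\pfibrado{s}{\epsilon}N,\qquad (g,n)\mapsto(g,gn),
\]
which is well defined since $\epsilon(gn)=s(g)$, smooth, and has smooth inverse $(g,m)\mapsto(g,g^{-1}m)$. Then $s'=\operatorname{pr}_2\circ\Psi$, and $\operatorname{pr}_2:\G\pfibrado{s}{\epsilon}N\to N$ is a submersion because $s$ is; hence $s'$ is a submersion. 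This is the precise form of the ``invertibility of the action'' idea you had in mind.

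A smaller point: your justification that $e'$ is a submersion via ``fibers are diffeomorphic to $e$-fibers'' is heuristic; the clean statement is that the pullback of the submersion $e:\G\to\Pc$ along $\epsilon:N\to\Pc$ yields a submersion $\G\pfibrado{e}{\epsilon}N\to N$ onto the second factor. With these two corrections your proof is complete.
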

\smallbreak
We remind now the definition of local bisections on a Lie groupoid.
\begin{definition}
Let $\xymatrix{\G \ar@<2pt>[r]^{s} \ar@<-2pt>[r]_{e} &\Pc}$ be a Lie groupoid
and let $U \subseteq \Pc$ be an open subset. A \emph{local bisection} of $G$ on $U$ is a
smooth map $\sigma: U \to \G$ which is a section of $e$ such that
$V:= (s \circ \sigma)(U)$ is an open subset of $\Pc$ and
$s \circ \sigma : U \to V$ is a diffeomorphism.
Define $\G^U = s^{-1}(U)$ and $\G_U = e^{-1}(U)$.
The \emph{local left} and \emph{right translations} induced by $\sigma$
are (respectively) the maps
\begin{equation}
\begin{aligned}
L_{\sigma}&: \G^U \to \G^V, \quad g \mapsto \sigma(s(g))g;\quad \text{and} \\
R_{\sigma}&: \G_V \to \G_U, \quad g \mapsto g\sigma((s \circ \sigma)^{-1}(e(g))).
\end{aligned}
\end{equation}

\end{definition}
For more on bisections see \cite[Section 1.4]{mk4}.

\begin{subsection}{Double Lie groupoids}

\begin{definition}[Ehresmann]

A \textit{double groupoid} is a groupoid object internal to the
category of groupoids. That is, a \textit{double groupoid} consist
of a set $\B$ with two groupoid structures with \textit{bases}
$\Hc$ and $\Vc$, which are themselves groupoids over a common base
$\Pc$, all subject to the compatibility condition that the
structure maps of each structure are morphisms with respect to the
other.
\end{definition}

It is usual to represent a double groupoid $(\B; \Vc, \Hc; \Pc)$
as a diagram of four related groupoids
$$
\xymatrix{ \B \ar@<2pt>[rr]^{l} \ar@<-2pt>[rr]_{r}
\ar@<2pt>[d]^{b} \ar@<-2pt>[d]_{t}
& & \Vc \ar@<2pt>[d]^{b} \ar@<-2pt>[d]_{t}\\
\Hc  \ar@<2pt>[rr]^{l} \ar@<-2pt>[rr]_{r} & & \Pc  }
$$
where $t$, $b$, $l$, $r$ mean `top', `bottom', `left' and `right',
respectively. We sketch the main axioms that these groupoids
should satisfy and refer \emph{e.~g.} to \cite[Section 2]{AN1} and
\cite[Section 1]{AN2} for a detailed exposition and other
conventions.

The elements of $\B$ are called `boxes' and will be denoted by
$$
A = \quad\caja{$A$}{$t(A)$}{$r(A)$}{$b(A)$}{$l(A)$}\quad \in\B.
$$
Here $t(A),\;b(A) \in \Hc$ and $l(A),\;r(A) \in \Vc$.
The identity maps will be denoted $\idd: \Vc \to \B$ and $\idd: \Hc
\to \B$. The product in the groupoid $\B$ with base $\Vc$ is
called {\em horizontal } and denoted by $AB$ or $\{AB\}$, for $A,B
\in \B$ with $r(A) = l(B)$. The product in the groupoid $\B$ with
base $\Hc$ is called {\em vertical} and denoted by $\begin{matrix}
A\\B\end{matrix}$ or $\left\{\begin{matrix}
A\\B\end{matrix}\right\}$, for $A,B \in \B$ with $l(A) =t(B)$.
This pictorial notation is useful to understand the products in
the double structure. For instance, compatibility axioms between
the horizontal and vertical products are described by

$$
\cajaMedium{$A$}{$t$}{$r$}{$b$}{$l$} \;
\cajaMedium{$B$}{$t'$}{$r'$}{$b'$}{$r$} =
\cajaMedium{$\{AB\}$}{$tt'$}{$r'$}{$bb'$}{$l$}\quad \text{ and }
\quad
\begin{matrix}
\cajaMedium{$A$}{$t$}{$r$}{$b$}{$l$} \\
\cajaMedium{$B$}{$b$}{$r'$}{$b'$}{$l'$}
\end{matrix} =\;
\cajaMedium{$\scriptstyle{\left\{\begin{matrix} A
\\B\end{matrix}\right\}}$}{$t$}{$rr'$}{$b'$}{$ll'$}
$$
We omit the letter inside the box if no confusion arises. We also
write $A^h$  and $A^v$ to denote the inverse of $A\in \B$ with
respect to the horizontal and vertical structures of  groupoid
over $\B$ respectively. When one of the sides of a box is an
identity, we draw this side as a double edge. For example, if
$t(A) = \id_p$, we draw \begin{tabular}{|p{0,1cm}|} \hhline{|=|} \\
\hline\end{tabular}  and say that  $t(A) \in \Pc$.

\begin{definition}[Mackenzie, \cite{mk1}]
A double groupoid is a \textit{double Lie groupoid} if all four
groupoids involved are Lie groupoids and the \textit{double source
map}
$$
\textbf{S}: \B \to \Hc \pfibrado{l}{t} \Vc, \qquad A \mapsto
\textbf{S}(A) = (t(A), l(A)),
$$
is a surjective submersion.
\end{definition}

For clarity, we shall say that a double groupoid is
\emph{discrete} if no Lie structure is present. A  discrete double
groupoid satisfies the \textit{filling condition} when the double
source map defined above is surjective. We refer the reader to \cite{AN3} for details.

\begin{definition}[Brown and Mackenzie, \cite{BM,
mk1}] Let $(\B;\Vc,\Hc;\Pc)$ be a double Lie groupoid. The core
groupoid $\Ec(\B)$ of $\B$  is
$$\Ec(\B)=\{E \in \B : \; t(E), \;
r(E) \in \Pc\}$$ with $s_\Ec$, $e_\Ec: \Ec(\B) \to \Pc$,
$s_\Ec(E)=bl(E)$, $e_\Ec(E)= tr(E)$; identity map given by $\id_p=
\Theta_p := \idd \circ \id(p)$; multiplication and inverse given
by
\begin{equation}\label{productcore} E \circ F : =
\left\{\begin{matrix} {\scriptstyle \idd l(F)} & F\\
E & {\scriptstyle \idd(b(F)) \vspace{-1pt}} \end{matrix} \right\},
\qquad E^{(-1)}: = (E \iddv b(E)^{-1})^v
= \left\{\begin{matrix}\iddv l(E)^{-1} \vspace{-4pt}\\
E^h\end{matrix} \right\},
\end{equation}
$E , F\in \Ec(\B)$. That is, the elements of $\Ec(\B)$ are of the
form $E =
\begin{tabular}{|p{0,1cm}||} \hhline{|=||} \\ \hline \end{tabular}
$; the source gives the bottom-left vertex and the target  gives
the top-right vertex of the box.
Clearly $s_\Ec$ and $e_\Ec$ are surjective submersions. Thus
$\Ec(\B)$ becomes a Lie groupoid,
differentiability conditions being easily verified because
$\Ec(\B)$ is an embedded submanifold of $\B$.
\end{definition}

\end{subsection}

\begin{subsection}{Coarse double groupoid}

Let $\Pc$ be a set and $\Vc$, $\Hc$ be groupoids over $\Pc$. Let $\square(\Vc, \Hc)$ be the set
$(\Vc\pfibrado{b}{l}\Hc)\pfibrado{(t,r)}{(l,b)}(\Hc\pfibrado{t}{r}\Vc)$;
that is, $\square(\Vc, \Hc)$ is the set of quadruples $\begin{pmatrix} \quad x  \quad \\  f \quad g \\
\quad y \quad\end{pmatrix}$ with $x,y\in \Hc$, $f,g\in \Vc$ such
that
\begin{equation*}
l(x) = t(f), \quad r(x) = t(g), \quad l(y) = b(f), \quad r(y) =
b(g). \end{equation*} If no confusion arises, we shall denote a
quadruple as above by  a box $\begin{matrix} \quad x \quad \\
h\,\, \boxe \,\, g \\ \quad y \quad
\end{matrix}$. The collection $\begin{matrix} \Box(\Vc, \Hc) &\rightrightarrows
&\Hc \\\downdownarrows &&\downdownarrows \\ \Vc &\rightrightarrows
&\Pc \end{matrix}$ forms a double groupoid in the obvious way, called the \textit{coarse double groupoid} with sides in $\Hc$ and $\Vc$.
\begin{obs}
\noindent (i) Let $M, N$ and $P$ be smooth manifolds, let $f:M \to P$ and $g:N \to P$ be smooth maps,
we remind that $f$ and $g$ are called \emph{transversal} at $p = f(m)=g(n)$,
for $m \in M$, $n \in N$, if $(T_m f)(T_m M)+(T_n g)(T_n N) = T_p P.$
We said that $f$ and $g$ are \emph{transversal} if they are  transversal at any $p$ as above.

\smallbreak
\noindent (ii) Let $\Vc$ and $\Hc$ be Lie groupoids over the same manifold $\Pc$, then if the anchors maps $\chi_{\Vc}:\Vc \to \Pc \times \Pc $ or $\chi_{\Hc} : \Hc \to \Pc \times \Pc$ are transversal then $\square(\Vc, \Hc)$ is a double Lie groupoid \cite{BM}.
\end{obs}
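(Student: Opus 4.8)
The plan is to verify directly the two requirements in Mackenzie's definition: that $\square(\Vc,\Hc)$ be a smooth manifold carrying two Lie groupoid structures over $\Vc$ and $\Hc$, and that its double source map $\mathbf{S}$ be a surjective submersion. I read the hypothesis as saying that one of the anchors, say $\chi_{\Vc}=(t,b)\colon\Vc\to\Pc\times\Pc$, is a submersion (a smooth map is transversal to every smooth map precisely when its differential is everywhere surjective); the roles of $\Vc$ and $\Hc$ are interchanged by reflecting boxes across their diagonal, which identifies $\square(\Vc,\Hc)$ with $\square(\Hc,\Vc)$ as double groupoids, so the case in which $\chi_{\Hc}$ is a submersion is identical. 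Throughout I describe a box by its four corner conditions $l(x)=t(f)$, $r(x)=t(g)$, $l(y)=b(f)$, $r(y)=b(g)$, so that $\square(\Vc,\Hc)$ is the iterated fiber product $A\,{}_{(t,r)}\!\times_{(l,b)}B$ with $A=\Vc\pfibrado{b}{l}\Hc$ and $B=\Hc\pfibrado{r}{t}\Vc$, the factor $A$ recording the south--west corner and $B$ the north--east one.

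First I would establish smoothness. The factors $A$ and $B$ are smooth because the structure maps $b,l,r,t$ are surjective submersions, so the two elementary fiber products are automatically transversal, hence smooth. The point is then to show that $\alpha=(t,r)\colon A\to\Pc\times\Pc$ is a submersion: given a target vector $(a,d)$ at $(t(f),r(y))$, I first lift $d$ along the submersion $r\colon\Hc\to\Pc$ to some $v\in T_y\Hc$, which fixes the value $c=(Tl)v$ at the south--west corner, and then lift $(a,c)$ along $\chi_{\Vc}$ — here the hypothesis is used — to some $u\in T_f\Vc$; the pair $(u,v)$ is tangent to $A$ because $(Tb)u=c=(Tl)v$ respects the constraint $b(f)=l(y)$, and $\alpha_{\ast}(u,v)=(a,d)$. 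Since $\alpha$ is a submersion it is transversal to $\beta=(l,b)\colon B\to\Pc\times\Pc$, so $\square(\Vc,\Hc)$ is a smooth manifold and the projection onto $B$ is a submersion.

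Next I would treat the double source map $\mathbf{S}\colon\square(\Vc,\Hc)\to\Hc\pfibrado{l}{t}\Vc$, $(x,y,f,g)\mapsto(x,f)$. For surjectivity I would use that a submersive anchor forces the orbits of $\Vc$ to be open: the restriction of $b$ to a $t$-fiber of $\Vc$ is again a submersion, so on each connected component of $\Pc$ the groupoid $\Vc$ is transitive; given a compatible pair $(x,f)$ one may then choose $g\in\Vc$ with $t(g)=r(x)$ and $b(g)=b(f)$ and take $y=\id(b(f))$, which completes $(x,f)$ to a genuine box. For the submersion property, at a box I would extend an arbitrary tangent vector $(u_x,u_f)$ of $\Hc\pfibrado{l}{t}\Vc$ to a tangent vector of $\square(\Vc,\Hc)$ by choosing $u_g$ with $(Tt)u_g=(Tr)u_x$ (possible since $\chi_{\Vc}$ is a submersion, leaving $(Tb)u_g$ free), choosing $u_y$ with $(Tl)u_y=(Tb)u_f$ (possible since $l$ is a submersion, leaving $(Tr)u_y$ free), and finally matching the south--east corner by equating the two still-free quantities $(Tb)u_g$ and $(Tr)u_y$.

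Finally I would record that both groupoid structures are smooth: the four structure maps $t,b\colon\square(\Vc,\Hc)\to\Hc$ and $l,r\colon\square(\Vc,\Hc)\to\Vc$ are submersions by the same corner-by-corner lifting argument, while the horizontal and vertical multiplications, inversions and identities are induced entrywise from those of $\Vc$ and $\Hc$ and are therefore smooth; this exhibits $\square(\Vc,\Hc)$ as a double Lie groupoid. I expect the main obstacle to be organizing the simultaneous lifts so that all four corner constraints hold at once: the essential mechanism is that a submersive anchor gives \emph{independent} control of the two endpoints of a $\Vc$-arrow, which is exactly what makes $\alpha$ transversal and what allows both the filling of frames and the extension of tangent vectors. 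The subtlest point is the global surjectivity of $\mathbf{S}$, which rests on passing from open orbits to transitivity and hence implicitly assumes $\Pc$ connected (otherwise one argues on each connected component separately).
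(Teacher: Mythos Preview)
The paper gives no proof of this remark: part (ii) is stated with a bare citation to \cite{BM} and nothing more, so there is no argument in the text to compare yours against.

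On its own merits your proposal is essentially sound, with one slip. In the submersion check for $\mathbf{S}$ you assert that after imposing $(Tl)u_y=(Tb)u_f$ the value $(Tr)u_y$ remains ``free''; that would require $\chi_{\Hc}=(l,r)$ to be a submersion, which you have not assumed. The repair is to reverse the order: first choose $u_y$ with $(Tl)u_y=(Tb)u_f$ using only that $l$ is a submersion, accept whatever $(Tr)u_y$ results, and \emph{then} use the submersive $\chi_{\Vc}$ to pick $u_g$ with both $(Tt)u_g=(Tr)u_x$ and $(Tb)u_g=(Tr)u_y$ prescribed. Your surjectivity argument for $\mathbf{S}$ is also a bit more delicate than ``argue on each component'': if $\Hc$ has arrows between distinct components of $\Pc$ you need an $\Hc$-arrow from $b(f)$ into the component of $r(x)$, not merely from $l(x)$, so the clean statement really wants $\Pc$ connected (which is the usual hypothesis in the locally trivial setting).

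Finally, note an ambiguity in the paper's wording. Since part (i) has just defined transversality of a \emph{pair} of maps, ``the anchors maps $\chi_{\Vc}$ \ldots\ or $\chi_{\Hc}$ \ldots\ are transversal'' may well be intended to mean that $\chi_{\Vc}$ and $\chi_{\Hc}$ are transversal \emph{to each other}, a weaker hypothesis than yours. Your reading---one anchor a submersion, i.e.\ the locally trivial case---is, however, the situation actually treated in \cite{BM}, so it is a reasonable interpretation of the citation.
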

\begin{definition}  \cite{AN3}
A double groupoid $(\B; \Vc, \Hc; \Pc)$ is \textit{slim} if the morphism of (discrete) double groupoids
$\Pi: \B \to \Box(\Vc, \Hc)$ given by
\begin{center}
$\Pi\left(\cajaSmall{$A$}{$x$}{$g$}{$y$}{$f$} \right) = \begin{pmatrix} \quad x \quad
\\  f \quad g \\ \quad y \quad\end{pmatrix}, \qquad
\cajaSmall{$A$}{$x$}{$g$}{$y$}{$f$}
\in \B,$
\end{center}
is injective.
\end{definition}

\end{subsection}

\section{Diagrams of Groupoids}\label{gr-diag}

\begin{definition}\cite{AN3}
Let $\Vc$ and $\Hc$ be groupoids over the same base $\Pc$. A
\emph{diagram}  over $\Hc$ and $\Vc$ is a triple $(\D, j, i)$
where $\D$ is a groupoid over $\Pc$ and $i: \Hc \to \D$, $j: \Vc
\to \D$ are morphisms of groupoids over $\Pc$.

If $\Vc$ and $\Hc$ are Lie groupoids, then a \textit{diagram of
Lie groupoids} over $\Hc$ and $\Vc$ is a diagram of groupoids,
such that $\D$ is a Lie groupoid and $i$, $j$ are smooth.
\end{definition}

\bigbreak

To each diagram of groupoids we can associate a discrete double groupoid,
denoted by $\square(\D, j, i)$ and defined as follows: the boxes in
$\square(\D, j,i)$ are of the form
$$A =
\begin{matrix} \quad x \quad \\ h \,\, \boxe \,\, g \\ \quad y\quad
\end{matrix} \in \square(\Vc, \Hc),$$ with $x, y \in \Hc$, $g, h \in \Vc$, such that $$i(x)j(g) = j(h) i(y)\quad \text{ in } \D.$$

\begin{definition}\cite{AN3}
A diagram of groupoids $(\D, j, i)$, over $\Vc$ and $\Hc$ is
called a $(\Vc, \Hc)$-\textit{factorization} of $\D$, if
$\D = j(\Vc)i(\Hc)$.
\end{definition}

\medbreak
Our aim is to determine when $\square(\D, j, i)$ is a double Lie groupoid.
We define two maps. The first one is the composition $\Hc\pfibrado{r}{t}\Vc \overset{i\times
j}\longrightarrow \D \pfibrado{e}{s}\D \overset{m}
\longrightarrow\D$, i. e. $$\Phi: \Hc\pfibrado{r}{t}\Vc \to\D, \quad (x,g)
\mapsto i(x)j(g),$$
where $e$, $s$ and $m$ are the end, source and multiplication maps of $\D$.
The second one is $$\Psi:  \Vc\pfibrado{b}{l}\Hc \to\D, \qquad (f,y) \mapsto j(f)\;i(y).$$
Since $t,\; b,\; l$ and $r$ are surjective
submersions, we have that the fiber products involved in the
above maps, $\Vc \pfibrado{b}{l}\Hc$ and
$\Hc\pfibrado{r}{t}\Vc$ are embedded submanifolds of
$\Vc\times\Hc$ and $\Hc\times\Vc$ respectively, and since $i$
and $j$ are smooth then $\Phi$ and $\Psi$ are also smooth. With the above maps
\begin{equation*}
\square(\D, j,i)=(\Vc\pfibrado{b}{l}\Hc)\pfibrado{\Psi}{\Phi}(\Hc\pfibrado{r}{t}
\Vc),
\end{equation*}

\noindent and from general theory of transversality \cite[Prop.
2.5]{Lang} if $\Phi$ and $\Psi$ are transversal, then $\square(\D,
j,i)$ is an embedded submanifold of
$(\Vc\pfibrado{b}{l}\Hc)\times(\Hc\pfibrado{r}{t}\Vc)$.

\begin{lema}\label{cudradovariedad}
Let $(\D,j,i)$ be a diagram of Lie groupoids. If $i$ and
$j$ are transversal at the identities, then $\Psi$ and $\Phi$ defined above are submersions.
\end{lema}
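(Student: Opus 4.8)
The plan is to prove the assertion for $\Phi$; the argument for $\Psi$ is the mirror image, obtained by interchanging the roles of $(i,\Hc)$ and $(j,\Vc)$ and of the relevant source/end maps, so I would only write out the case of $\Phi$. Fix a point $(x_0,g_0)\in\Hc\pfibrado{r}{t}\Vc$, put $p_0=r(x_0)=t(g_0)$ and $q_0=\Phi(x_0,g_0)=i(x_0)j(g_0)$; the goal is to show that $T_{(x_0,g_0)}\Phi$ is onto $T_{q_0}\D$. The whole hypothesis concerns the differentials of $i$ and $j$ \emph{at the identities}, so the first move is to reduce the surjectivity at the arbitrary point $(x_0,g_0)$ to the single point $(\id_{\Hc}(p_0),\id_{\Vc}(p_0))$, which lies over the identity $\id_{\D}(p_0)$.

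For this reduction I would use the local translations recalled above together with the fact that $i$ and $j$ are morphisms over $\Pc$. Choose a local bisection $\sigma$ of $\Hc$ with $\sigma(p_0)=x_0$ and a local bisection $\tau$ of $\Vc$ with $\tau(b(g_0))=g_0$; these exist since the source and end maps are surjective submersions. Because $i$ and $j$ are morphisms, $i\circ\sigma$ and $j\circ\tau$ are local bisections of $\D$, and one checks the intertwining identities $i\circ L_{\sigma}=L_{i\circ\sigma}\circ i$ and $j\circ R_{\tau}=R_{j\circ\tau}\circ j$. Since left translations preserve the end map ($r$ on $\Hc$) and right translations preserve the source map ($t$ on $\Vc$), the map $F=L_{\sigma}\times R_{\tau}$ restricts to a local diffeomorphism of $\Hc\pfibrado{r}{t}\Vc$, and a direct computation using associativity in $\D$ gives $\Phi\circ F=(L_{i\circ\sigma}\circ R_{j\circ\tau})\circ\Phi$. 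As $L_{i\circ\sigma}\circ R_{j\circ\tau}$ is a local diffeomorphism of $\D$ and $F(\id_{\Hc}(p_0),\id_{\Vc}(p_0))=(x_0,g_0)$, the map $\Phi$ is a submersion at $(x_0,g_0)$ if and only if it is a submersion at the identity point. This equivariance step — selecting the correct left/right translations and verifying that $F$ respects the fibred product — is the main obstacle, because it is exactly what transports the hypothesis, available only at identities, to the general point.

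It then remains to check surjectivity of $T\Phi$ at $(\id_{\Hc}(p_0),\id_{\Vc}(p_0))$, where $\Phi$ takes the value $\id_{\D}(p_0)$. Here I would evaluate $T\Phi$ on three explicit families of tangent vectors lying in the fibred product. Keeping $g=\id_{\Vc}(p_0)$ fixed and letting $x$ vary with $r(x)=p_0$ gives $\Phi(x,\id_{\Vc}(p_0))=i(x)$, so the image of $T\Phi$ contains $Ti(\ker Tr)$; symmetrically, fixing $x=\id_{\Hc}(p_0)$ and varying $g$ with $t(g)=p_0$ gives $\Phi(\id_{\Hc}(p_0),g)=j(g)$, so the image contains $Tj(\ker Tt)$; finally, the diagonal family $\bigl(T\id_{\Hc}(v),T\id_{\Vc}(v)\bigr)$ with $v\in T_{p_0}\Pc$ satisfies the fibred-product constraint and maps to $T\id_{\D}(v)$, so the image also contains $T\id_{\D}(T_{p_0}\Pc)$.

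To conclude I would combine these three families with the splitting $T_{\id_{\Hc}(p_0)}\Hc=\ker(Tr)\oplus T\id_{\Hc}(T_{p_0}\Pc)$, valid because $r$ is a submersion and $r\circ\id_{\Hc}=\id_{\Pc}$, and the analogous splitting for $\Vc$. Using $Ti\circ T\id_{\Hc}=T\id_{\D}=Tj\circ T\id_{\Vc}$ (again the morphism property over $\Pc$), the transversality hypothesis $Ti(T_{\id_{\Hc}(p_0)}\Hc)+Tj(T_{\id_{\Vc}(p_0)}\Vc)=T_{\id_{\D}(p_0)}\D$ is equivalent to $Ti(\ker Tr)+Tj(\ker Tt)+T\id_{\D}(T_{p_0}\Pc)=T_{\id_{\D}(p_0)}\D$. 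By the previous paragraph this sum lies in the image of $T\Phi$ at the identity point, so $T\Phi$ is surjective there; the reduction of the second paragraph then yields surjectivity at every point, and hence that $\Phi$ (and, by symmetry, $\Psi$) is a submersion.
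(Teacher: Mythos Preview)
Your argument is correct. Both your proof and the paper's share the essential idea of using local bisections to transport the transversality hypothesis, available only at identities, to an arbitrary point; but the implementations differ in a meaningful way. The paper works directly at a general point $(f,y)$: given a target vector $W$, it uses Xu's formula for the tangent of multiplication to express $T\Psi(X_1,Y_1)$ in terms of left/right translations, translates $W$ back to $T_{\id_\D(p)}\D$ via $(TR_{\tau_\D}^{-1})(TL_{\sigma_\D}^{-1})$, splits it there using transversality, and then pushes forward and corrects with auxiliary vectors $X',Y',Z$ so that the formula closes up. Your route is more structural: you first establish the equivariance $\Phi\circ F=(L_{i\circ\sigma}\circ R_{j\circ\tau})\circ\Phi$, which reduces the entire question to the single identity point, and then at that point you read off the image of $T\Phi$ from three simple families, avoiding Xu's formula and the correction terms altogether. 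The payoff of your approach is a cleaner separation between the translation step and the linear-algebra step, and the explicit reformulation of transversality as $Ti(\ker Tr)+Tj(\ker Tt)+T\id_\D(T_{p_0}\Pc)=T_{\id_\D(p_0)}\D$; the paper's computation, while less transparent, has the minor advantage of producing an explicit preimage $(X_1,Y_1)$ of a given $W$.
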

\pf We take $((f,
y),(x,g))\in(\Vc\pfibrado{b}{r}\Hc)\times(\Hc\pfibrado{r}{t}\Vc)$
such that $\Psi(f, y)=\Phi(x, g)$ i.e, $j(f)i(y)=i(x)j(g)$. Now, by \cite[Prop. 2.5]{Lang}, we have
$$T_{(x,g)}(\Hc\pfibrado{r}{t}\Vc)=\{(Y, X)\in (T_x\Hc)\times(T_g\Vc)/
(T_x \;r)(Y)=(T_g \;t)(X)\}.$$
Let $W\in T_{(x,g)}(\Hc\pfibrado{r}{t}\Vc)$. We need
to prove that there is $(X_1, Y_1)$ belonging to $T_{(f,y)}(\Vc\pfibrado{b}{l} \Hc)$ such that
\begin{align*}
T_{(f, y)}\Psi(X_1,Y_1) &=(T_{(j(f), i(y))}m)(T_{(f, y)}j\times i)(X_1,Y_1) \\
  &= (T_{(j(f),i(y))}m)((T_fj)(X_1), (T_y\,i)(Y_1)).
  \end{align*}
We know that in $\Hc\rightrightarrows \Pc$ there is a local
bisection $\tau: U\to \Hc$ with $r(y)\in U\subseteq\Pc$ open and
$\tau(r(y))=y$ \cite[Prop. 1.4.9]{mk4}. Since $\tau$ is a
bisection, it induces local left and right translations defined as
follows. Set $V= (l\circ \tau)(U)$, open in $\Pc$,
$\Hc^U = l^{-1}(U)$ and $\Hc_U = r^{-1}(U)$ (the same for $V$), and
$$L_{\tau}: \Hc^U \to \Hc^V,\quad z  \mapsto \tau(l(z))z \quad
\text{and} \quad R_{\tau}: \Hc_V \to \Hc_U, \quad z  \mapsto z\;
\tau((l \circ \tau)^{-1} r(z))$$
Define the map $\tau_\D: U \to \D$ by $i \circ \tau$ and using
that $i$ is a groupoid morphism note that it is a local bisection
of $\D$. Also note that $\tau_\D(e(i(y)))=(i\circ
\tau)(r(y))=i(y)$.

In the same way there is a local bisection $\sigma:U' \to \Vc$
such that $\sigma_\Vc(b(f))=f$ with $U'\subseteq \D$ open and
$b(f) \in U'$. Again this induces a bisection in $\D$, $\sigma_\D:
U' \to \D$ such that $\sigma_\D(e(j(f)))=j(f)$.
Let $(X_1, Y_1)\in T_{(f,y)}(\Vc\pfibrado{b}{l} \Hc)$. Then by Xu's
formula for product in a tangent groupoid \cite[Theorem
1.4.14]{mk4} we obtain:
\begin{align*}
T_{(f, y)}\Psi(X_1, Y_1) &=(T_{(j(f), i(y))}m)((T_fj)(X_1),(T_y\,i)(Y_1))\\
  &= (T_{i(y)}L_{\sigma_\D})(T_y\,i)(Y_1)+(T_{j(f)}R_{\tau_\D})(T_fj)(X_1)
  \\ &\;-(T_{i(y)}L_{\sigma_\D})(T_{\id_{\D}l(y)}R_{\tau_\D})(T_{l(y)}\id_\D)(z)
 ,  \end{align*}
where we write
$z=(T_{j(f)}e)(T_fj)(X_1)=(T_{i(y)}s)(T_y\,i)(Y_1).$

Now $(T_{j(f)i(y)}L^{-1}_{\sigma_\D})(W)\in T_{i(y)}\D$ because we
have
\begin{align*}
L^{-1}_{\sigma_\D}(j(f)i(y)) &=
\sigma_\D^{-1}(s(j(f)i(y)))j(f)i(y)\\ &= \sigma_\D((s \circ
\sigma_\D)^{-1}(s (j(f)i(y))))^{-1}j(f)i(y)\\ &=
\sigma_\D((s \circ \sigma_\D)^{-1} s (j(f)))^{-1}j(f)i(y)\\
&= \sigma_\D(e (j(f)))^{-1}j(f)i(y)\\ &=
j(f)^{-1}j(f)i(y)=i(y).
\end{align*}
In analogous way, we have
$(T_{i(y)}R_{\tau_\D}^{-1})(T_{j(f)i(y)}L_{\sigma_\D}^{-1})(W)\in
T_{Id_\D l(y)} \D$ since
\begin{align*}
R_{\tau_\D}^{-1}(i(y)) &= i(y)\tau_\D^{-1}((s \circ
\tau_\D^{-1})^{-1} e (i(y)))\\ &= i(y)\tau_\D((s \circ
\tau_\D)^{-1}((s \circ \tau_\D^{-1})^{-1} e (i(y))))^{-1}\\ &= i(y)\tau_\D((s \circ
\tau_\D)^{-1}((s \circ \tau_\D) e (i(y))))^{-1}\\
&=i(y)\tau_\D(e (i(y)))^{-1} = i(y)^{-1}i(y)=Id_\D s (i(y)) =
Id_\D (l (y)).
\end{align*}
Denote $p=l(y)$ since $i$ and $j$ are transversal
at $Id_\D (p)$ then
$$T_{Id_p}\D = (T_{Id_p}i)(T_{ Id_p}\Hc)+(T_{ Id_p}j)(T_{Id_p}\Vc)$$
and in consequence we can find $X\in T_{Id_p}\Hc$, $ Y \in
T_{Id_p}\Vc$ such that
$$(T_{i(y)}R_{\tau_\D}^{-1})(T_{j(f)i(y)}L_{\sigma_\D}^{-1})(W) = (T_{Id_p}i)(X)+(T_{Id_p}j)(Y).$$
Thus, if we consider the vectors
\begin{align*}
X' = X + (T_p Id_\Hc)(T_{Id_p} t)(Y),\quad Y' = Y + (T_p
Id_\Vc)(T_{Id_p}l)(X),
\end{align*}
a direct calculation shows that
\begin{equation}\label{tr}(T_{i(y)}R_{\tau_\D}^{-1})(T_{j(f)i(y)}L_{\sigma_\D}^{-1})(W)
= (T_{Id_p}i)(X')+(T_{Id_p}j)(Y')-(T_p Id_\D)(Z)\end{equation}
where $Z=(T_{Id_p} b)(Y)+(T_{Id_p}l)(X)$. Since
\begin{align*}
(L_{\sigma_\D}\circ R_{\tau_\D} \circ i) = (L_{\sigma_\D} \circ i
\circ R_{\tau}), \quad (L_{\sigma_\D}\circ R_{\tau_\D}\circ j) =
(R_{\tau_\D}\circ j \circ L_{\sigma_\Vc}),
\end{align*}
we may apply $(T_{i(y)}L_{\sigma_D})(T_{Id_p}R_{\tau_\D})$ to both
sides of \eqref{tr}, and arrive to
$$W = (T_{i(y)}L_{\sigma_\D})(T_y\,i)(Y_1)+(T_{j(f)}R_{\tau_\D})(T_fj)(X_1)
-(T_{i(y)}L_{\sigma_\D})(T_{Id_p}R_{\tau_\D})(T_p Id_\D)(Z),$$
where $X_1 = (T_{Id_p}L_{\sigma_\Vc})(Y')$, $Y_1 =
(T_{Id_p}R_{\tau})(X')$. It is clear that $(T_f \;b)(X_1) = Z =
(T_y \;l)(Y_1),$ thus $T_{(f, y)}\Psi(X_1, Y_1) = W$.

We prove that $\Phi$ is a submersion in the same way. \epf

From the above result we obtain the following immediate
consequence.

\begin{theorem}\label{associatedslim}
Let $(\D, j, i)$ be a $(\Vc, \Hc)$-\textit{factorization} of the Lie
groupoid $\D$. If $i$ and $j$ are transversal
at the identities, then $\square(\D, j, i)$ is an slim double Lie
groupoid.
\end{theorem}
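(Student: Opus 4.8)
The plan is to separate the combinatorial content, which is already available from \cite{AN3}, from the differentiable content, which is supplied by Lemma~\ref{cudradovariedad}. Since $(\D,j,i)$ is a $(\Vc,\Hc)$-factorization, \cite{AN3} shows that the discrete double groupoid $\square(\D,j,i)$ is slim and satisfies the filling condition; thus, as a double groupoid with vertical and horizontal sides $\Vc$ and $\Hc$ over $\Pc$, it is already constructed, and in particular its double source map is surjective. It therefore remains only to endow $\B:=\square(\D,j,i)$ with a smooth structure and to check the Lie-theoretic requirements: that $\B$ is a manifold, that the four groupoids are Lie groupoids, and that the double source map is a submersion. By Lemma~\ref{cudradovariedad} the maps $\Phi$ and $\Psi$ are submersions; being two submersions with common target $\D$, they are automatically transversal, so \cite[Prop.~2.5]{Lang} applies and exhibits $\B=(\Vc\pfibrado{b}{l}\Hc)\pfibrado{\Psi}{\Phi}(\Hc\pfibrado{r}{t}\Vc)$ as an embedded submanifold of the ambient product manifold. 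Hence $\B$ is a smooth manifold.

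Writing $\B$ as the fibered product of the first factor $\Vc\pfibrado{b}{l}\Hc$ and the second factor $\Hc\pfibrado{r}{t}\Vc$ over $\D$, I would next observe that, because $\Phi$ (resp. $\Psi$) is a submersion, the projection $\operatorname{pr}_1$ (resp. $\operatorname{pr}_2$) of $\B$ onto the first (resp. second) factor is a submersion. Each projection of a factor onto $\Vc$ or $\Hc$ is the pullback of one of the structural surjective submersions $t,b\colon\Vc\to\Pc$ or $l,r\colon\Hc\to\Pc$, hence is itself a surjective submersion. Consequently the four side maps $t,b\colon\B\to\Hc$ and $l,r\colon\B\to\Vc$, being the composites of these factor projections with $\operatorname{pr}_1,\operatorname{pr}_2$, are submersions, and their surjectivity follows from the existence of identity boxes. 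The units, inverses and the horizontal and vertical multiplications are assembled from the smooth structure maps of $\Vc$, $\Hc$ and $\D$ and restrict to the embedded submanifold $\B$, so they are smooth. This makes both $\B\rightrightarrows\Vc$ and $\B\rightrightarrows\Hc$ Lie groupoids, and together with the hypotheses on $\Vc,\Hc\rightrightarrows\Pc$ all four groupoids involved are Lie groupoids.

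The remaining, and to my mind the only genuinely delicate, point is that the double source map $\textbf{S}\colon\B\to\Hc\pfibrado{l}{t}\Vc$, $A\mapsto(t(A),l(A))$, is a submersion; its surjectivity is precisely the filling condition already secured. Rather than argue this from the side maps, which being submersions individually does not suffice, I would produce an explicit local section of $\textbf{S}$ through an arbitrary box $A_0$ lying over $(x_0,h_0)$. The key is again that $\Psi$ is a submersion: setting $d(x,h)=i(x)^{-1}j(h)$, a smooth $\D$-valued function on pairs with $l(x)=t(h)$, one has $d(x_0,h_0)=\Psi(g_0,y_0^{-1})$ by the defining equation of $A_0$, so $\Psi$ admits a smooth local section $\varsigma(d)=(g(d),\eta(d))$ with $\varsigma(d(x_0,h_0))=(g_0,y_0^{-1})$. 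Then $(x,h)\mapsto$ the box with top $x$, left $h$, right $g(d(x,h))$ and bottom $\eta(d(x,h))^{-1}$ is smooth, equals $A_0$ at $(x_0,h_0)$, and is a section of $\textbf{S}$. Verifying that its values lie in $\B$, i.e. checking the four corner incidences and the identity $i(x)j(g)=j(h)i(y)$, is the one place needing care, but it follows by a direct source/target computation using that $i$ and $j$ are morphisms over $\Pc$. Since $\textbf{S}$ then admits a local section through every point, it is a submersion, and we conclude that $\B=\square(\D,j,i)$ is a slim double Lie groupoid.

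I expect the main obstacle to be exactly this last step. The manifold structure and the Lie groupoid structures on $\B$ follow formally from Lemma~\ref{cudradovariedad} together with the stability of submersions under pullback and composition, whereas the submersivity of the double source map must genuinely be extracted from that of $\Psi$ (equivalently $\Phi$) through the local factorization above; the bookkeeping that guarantees the constructed section lands inside $\B$ is the subtle part.
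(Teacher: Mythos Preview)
Your proof is correct and follows essentially the same route as the paper: invoke Lemma~\ref{cudradovariedad} to get that $\Phi$ and $\Psi$ are submersions, deduce that $\square(\D,j,i)$ is an embedded submanifold with submersive projections onto each factor, and hence that the side maps and the double source map are surjective submersions, while smoothness of the remaining structure maps is inherited from $\Vc$, $\Hc$ and $\D$. The only difference is one of detail: the paper dispatches the double source map in a single clause (``and the same for the double source map''), whereas you spell out an explicit local section via $\Psi$; your argument is the honest justification of that clause, not a different approach.
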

\begin{proof} By Lemma \ref{cudradovariedad} we have  that $\Phi$ and $\Psi$
are transversal, thus $\square(\D, j,i)$ is an embedded
submanifold of
$(\Vc\pfibrado{b}{l}\Hc)\times(\Hc\pfibrado{r}{t}\Vc)$. Since
$\Phi$ and $\Psi$ are surjective submersions, both projections of
$ \square(\D,
j,i)=(\Vc\pfibrado{b}{l}\Hc)\pfibrado{\Psi}{\Phi}(\Hc\pfibrado{r}{t}
\Vc)$, to the first an second components are surjective
submersions and the same is true for the projections from the
fiber products $\Vc \pfibrado{b}{l} \Hc$ and $\Hc \pfibrado{r}{t}
\Vc$, then the top, bottom, left and right maps from $\square(\D,
j,i)$ are surjective submersions and the same for the double
source map. It is clear that the compositions, the identities maps and the
inversions maps are smooth.
\end{proof}

\section{Diagonal groupoid associated to a slim double Lie groupoid}

From now on and until Lemma \ref{correspondence} all groupoids are
discrete.

\subsection{Diagonal groupoid}\label{thin-gpd}

In this section we recall from \cite{AN3} the construction of the
diagonal groupoid. Let $\B$ be a double groupoid that satisfies the
filling condition and let $\Vc\circledast\Hc$ be the free product (over $\Pc$)
of the vertical and horizontal groupoids, see \cite{AN3} and \cite{higgins}.
If $\cajaSmall{$A$}{$x$}{$g$}{$y$}{$h$} \in \B$ we denote $ [A] :=
x g y^{-1} h^{-1} \in \Vc\circledast\Hc$. Then  $J_\circledast(\B)$
is the subgroupoid of $\Vc\circledast\Hc$ generated
by $\{[A]| A \in \B\}$.
As $s_\circledast([A]) = e_\circledast([A]) = tl (A)$ we have that the
groupoid $J_\circledast(\B) \tto \Pc$ is in fact a group bundle.
We know that the group bundle $J_\circledast(\B)$ is a a
normal subgroupoid of $\Vc \circledast \Hc$ \cite[lemma 3.5]{AN3}.

Assume that $(\B; \Vc,\Hc; \Pc)$ is slim; then the associated {\em
diagonal groupoid} is $\D(\B)= \Vc\circledast\Hc /
J_\circledast(\B) $. If we compose the natural inclusions of $\Vc$
and $\Hc$ in $\Vc \circledast\Hc$ with the projections on $\D(\B)$
we get two groupoid morphisms:
$$ i: \Hc \to \D(\B) \quad \text{and} \quad j:\Vc \to \D(\B).$$
Thus we have a diagram  $(\D(\B), i, j)$. Our aim is to give
another presentation of the diagonal groupoid as a quotient of
$\Vc \pfibrado{b}{l}\Hc$.

\begin{prop}\label{diagofibr}
Let $(\B; \Vc, \Hc; \Pc)$ be a slim double groupoid that satisfies
the filling condition. We define on $\Vc
\pfibrado{b}{l}\Hc$ the following relation $\sim_\B$:
\begin{center}
$(v_1, h_1)\sim_\B(v_2, h_2)$ if and only if $r(h_1) = r(h_2)$, $t(v_1) = t(v_2)$ and
$v_1h_1h_2^{-1}v_2^{-1} \in J_\circledast(\B)$.
\end{center}
then  $\sim_\B$ is an equivalence relation and the map
\begin{alignat*}3
\phi: \Vc \pfibrado{b}{l}\Hc /\sim_\B\,\; \; \to \D(\B), \qquad
\;[v,h] \mapsto j(v)i(h)
\end{alignat*}
is well defined and is a bijection (of quivers over $\Pc$).
\end{prop}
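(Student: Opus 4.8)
The plan is to treat the three assertions --- that $\sim_\B$ is an equivalence relation, that $\phi$ is well defined, and that $\phi$ is a bijection --- by exploiting that $\D(\B)=\Vc\circledast\Hc/J_\circledast(\B)$ is a quotient by a \emph{normal} group bundle, together with the filling condition. Throughout let $\pi:\Vc\circledast\Hc\to\D(\B)$ denote the canonical projection, so that $j$ and $i$ are the composites of the inclusions of $\Vc$ and $\Hc$ with $\pi$; thus for $(v,h)\in\Vc\pfibrado{b}{l}\Hc$ one has $\phi([v,h])=j(v)i(h)=\pi(vh)$, an arrow of $\D(\B)$ from $t(v)$ to $r(h)$. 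The two conditions $r(h_1)=r(h_2)$ and $t(v_1)=t(v_2)$ in the definition of $\sim_\B$ are precisely what make $v_1h_1h_2^{-1}v_2^{-1}$ a loop at $t(v_1)$, hence a candidate element of the group bundle. I would first check that $\sim_\B$ is an equivalence relation, which is immediate from the fact that $J_\circledast(\B)$ is a subgroupoid: reflexivity uses $vhh^{-1}v^{-1}=\id_{t(v)}\in J_\circledast(\B)$; symmetry uses closure under inversion, since $v_2h_2h_1^{-1}v_1^{-1}=(v_1h_1h_2^{-1}v_2^{-1})^{-1}$; and transitivity follows from the telescoping identity $(v_1h_1h_2^{-1}v_2^{-1})(v_2h_2h_3^{-1}v_3^{-1})=v_1h_1h_3^{-1}v_3^{-1}$ together with closure under composition.

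Well-definedness of $\phi$ and its injectivity are then two sides of a single computation. Since $J_\circledast(\B)$ is normal \cite[Lemma 3.5]{AN3}, for two arrows of $\Vc\circledast\Hc$ with equal source and target one has $\pi(vh)=\pi(v'h')$ if and only if $(vh)(v'h')^{-1}=vh\,h'^{-1}v'^{-1}\in J_\circledast(\B)$. Reading this equivalence from left to right gives well-definedness, the equality of source and target being guaranteed by the conditions defining $\sim_\B$; reading it from right to left, and observing that $\pi(vh)=\pi(v'h')$ already forces equal source and target, hence $t(v)=t(v')$ and $r(h)=r(h')$, gives injectivity. Since $\sim_\B$ fixes $t(v)$ and $r(h)$, the source and target descend to the quotient and $\phi$ is automatically a morphism of quivers over $\Pc$.

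The substantive point is surjectivity, and this is where the filling condition enters. I would prove it by a closure argument: let $S=\{\,j(v)i(h):(v,h)\in\Vc\pfibrado{b}{l}\Hc\,\}$ be the image of $\phi$. Since $\D(\B)$ is generated as a groupoid by $j(\Vc)\cup i(\Hc)$ and contains every identity $\id_p=j(\id_p)i(\id_p)\in S$, it suffices to show that $S$ is stable under right multiplication by the generators. Stability under $i(\Hc)$ is trivial, as $j(v)i(h)\,i(h')=j(v)i(hh')$. The essential case is right multiplication by $j(g)$, where I must rewrite the middle factor $i(h)j(g)$ in the form $j(h')i(y')$: using a box $A$ with $t(A)=h$ and $r(A)=g$, its defining relation $[A]=h\,g\,b(A)^{-1}l(A)^{-1}\in J_\circledast(\B)$ yields, upon applying $\pi$, the commuting--square identity $i(h)j(g)=j(l(A))i(b(A))$, whence $j(v)i(h)j(g)=j\bigl(v\,l(A)\bigr)i(b(A))\in S$.

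The main obstacle is producing such a box, because the filling condition only asserts that the double source map $(t,l)$ is surjective, i.e.\ it fills a prescribed \emph{top and left}, whereas here I need a prescribed \emph{top and right}. I would resolve this with the horizontal inverse: the composability $r(h)=t(g)$ rewrites as $l(h^{-1})=t(g)$, so the filling condition furnishes a box $C$ with $t(C)=h^{-1}$ and $l(C)=g$, and then $A:=C^h$ satisfies $t(A)=t(C)^{-1}=h$ and $r(A)=l(C)=g$, as required. With this, $\id_p\in S$ and $S$ is stable under the generators, so $S=\D(\B)$ and $\phi$ is onto; combined with the previous paragraph, $\phi$ is a bijection of quivers over $\Pc$.
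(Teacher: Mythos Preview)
Your proof is correct and follows essentially the same approach as the paper: well-definedness and injectivity via the normality of $J_\circledast(\B)$, and surjectivity by using the filling condition to rewrite $i(h)j(g)$ as $j(f)i(y)$, iterated through a word of generators. Your closure-under-right-multiplication formulation is just a repackaging of the paper's ``commute the $d_i$'s'' argument, and you are in fact more careful than the paper in spelling out the horizontal-inverse trick needed to fill a top--right corner from the $(t,l)$-surjectivity hypothesis.
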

\begin{proof} Clearly, $\sim_\B$ is an equivalence relation;
we denote  $\G := (\Vc \pfibrado{b}{l}\Hc)/\sim_\B$. Let $[f_1,
x_1] = [f_2, x_2]$ both in $\G$, then $f_1x_1x_2^{-1}f_2^{-1} \in
J_\circledast(\B)$, so $\overline{f_1} \; \overline{x_1} =
\overline{f_2} \; \overline{x_2}$ in $\D(\B)$; where
$\overline{w}$ denotes the image of $w$ under $i$ if $w$ belongs
to $\Hc$, or under $j$ if $w$ belongs to $\Vc$. This proves that
$\phi$ is well defined.
Suppose  that $g,g' \in \Vc$, $x,x' \in \Hc$ is any collection satisfying
$\overline{g}\overline{x} = \overline{g'}\overline{x'}$.
Then $g'x'x^{-1}g^{-1} \in J_\circledast(\B)$, hence $[g', x'] = [g, x]$.
Therefore $\phi$ is injective.

To prove that $\phi$ is surjective, let $d\in\D(\B)$.
Then $d=\overline{d_1}\;\overline{d_2}...\overline{d_n}$
with $d_i$ an element of $\Vc$ or $\Hc$. Let ${d_i} \in \Hc$,
 ${d_{i+1}}\in \Vc$ with
$r(d_{i})=t(d_{i+1})$. Since $\B$ satisfies the filling condition,
the corner
\begin{equation}\label{esquina}\begin{matrix} {d_{i}} \quad
\\ \boxur \; {d_{i+1}} \\ \quad \end{matrix},\end{equation}
can be completed to a box in $\B$, \emph{i.~e.} there exists  $B\in \B$
such that
$$
B = \begin{matrix} \quad {d_{i}} \quad \\ f \,\, \boxe \,\,{d_{i+1}} \\
\quad y\quad \end{matrix}.
$$
Thus, ${d_{i}}{d_{i+1}}y^{-1}f^{-1} \in J_\circledast(\D)$ and
$\overline{d_{i}}\;\overline{d_{i+1}}=
\overline{f}\;\overline{y}$. So, we can commute the $d_i$'s  in
$d=\overline{d_1}\;\overline{d_2}...\overline{d_n}$ in such a way
that we can obtain $d = \overline{g}\;\overline{x}$ with $g \in
\Vc$ and $x \in \Hc$ $b(g)=l(x)$. This proves that
$\phi$ is surjective.
\end{proof}

\begin{obs} \label{estructuradiagonal} $\phi$ induces a structure of groupoid
on  $ \G = \Vc \pfibrado{b}{l}\Hc /\sim_\B$ by:
\begin{itemize}
\item the source and the target projections are
$$s: \G \to \Pc,  \quad [v ,h]  \mapsto t(v); \quad\quad e: \G \to \Pc, \quad [v, h]  \mapsto  r(h).$$
\item The inclusion map is $ \id: \Pc \to \G, \quad p\mapsto \id_p = [\id_p, \id_p].$
\item The partial multiplication is $[v_1,h_1][v_2,h_2] = [v_1f, z\;h_2]$ where $\begin{matrix} \quad h_1 \quad \\ f \,\, \boxe \,\, v_2 \\
\quad z \quad \end{matrix} \in \B.$
\item The inverse is $[v,h]^{-1} = [f^{-1}, z^{-1}] \quad
\text{where} \quad
\begin{matrix} \quad z \quad \\ v \,\, \boxe \,\, f \\
\quad h \quad \end{matrix} \in \B$.
\end{itemize}

If $\Vc$ and $\Hc$ are Lie groupoids, then $b$ and $l$ are
surjective submersions, thus  $\Vc \pfibrado{b}{l}\Hc$ is an
embedded submanifold of $\Vc \times \Hc$. We will prove in Theorem
\ref{diagonalisLie} that $\Vc\pfibrado{b}{l}\Hc /\sim_\B$ is a
Lie groupoid under certain conditions.
\end{obs}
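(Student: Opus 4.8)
The plan is to transport the groupoid structure of $\D(\B)$ to $\G = \Vc\pfibrado{b}{l}\Hc/\sim_\B$ along the bijection $\phi$ of Proposition \ref{diagofibr}, and then to identify the transported operations with the five displayed formulas. Since $\phi\colon\G\to\D(\B)$ is a bijection of quivers over $\Pc$ and $\D(\B)$ is a groupoid over $\Pc$, there is a unique groupoid structure on $\G$ for which $\phi$ is an isomorphism of groupoids, and for this structure every groupoid axiom (associativity and the unit and inverse laws) holds automatically. In particular the operations are automatically well defined on $\sim_\B$-classes and independent of all the choices made below, so the whole content of the remark reduces to computing the transported source, target, unit, product and inverse. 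Throughout I read off from the first two formulas the conventions $s_\Vc=t$, $e_\Vc=b$, $s_\Hc=l$, $e_\Hc=r$.

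Recall that $\phi([v,h])=\overline v\,\overline h=j(v)i(h)$, where $\overline w$ denotes the image under $j$ or $i$. Because $i$ and $j$ are morphisms of groupoids over $\Pc$, the source of $\overline v\,\overline h$ in $\D(\B)$ is the source of $\overline v$, namely $t(v)$, and its target is the target of $\overline h$, namely $r(h)$; this is exactly the stated $s$ and $e$. Likewise $\phi([\id_p,\id_p])=j(\id_p)i(\id_p)=\id_p$, so $[\id_p,\id_p]$ is the transported unit at $p$.

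For the product, take composable classes, i.e. $r(h_1)=t(v_2)$. As in the proof of Proposition \ref{diagofibr}, the filling condition completes the corner $\begin{matrix} h_1 \quad \\ \boxur\; v_2 \\ \quad \end{matrix}$ to a box $\begin{matrix} \quad h_1 \quad \\ f \,\, \boxe \,\, v_2 \\ \quad z \quad \end{matrix}\in\B$. Reading the four vertices gives $t(f)=l(h_1)=b(v_1)$, $l(z)=b(f)$ and $r(z)=b(v_2)=l(h_2)$, so that $v_1 f$ is defined in $\Vc$, $z\,h_2$ in $\Hc$, and $(v_1 f,z\,h_2)\in\Vc\pfibrado{b}{l}\Hc$. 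Since $h_1 v_2 z^{-1} f^{-1}=[B]\in J_\circledast(\B)$ we get $\overline{h_1}\,\overline{v_2}=\overline f\,\overline z$ in $\D(\B)$, hence
$$\phi([v_1,h_1])\phi([v_2,h_2])=\overline{v_1}\,\overline{h_1}\,\overline{v_2}\,\overline{h_2}=\overline{v_1}\,\overline f\,\overline z\,\overline{h_2}=\overline{v_1 f}\;\overline{z\,h_2}=\phi([v_1 f,z\,h_2]),$$
and applying $\phi^{-1}$ yields $[v_1,h_1][v_2,h_2]=[v_1 f,z\,h_2]$.

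For the inverse, complete the bottom-left corner determined by the left side $v$ and the bottom side $h$ to a box $\begin{matrix} \quad z \quad \\ v \,\, \boxe \,\, f \\ \quad h \quad \end{matrix}\in\B$; this corner is fillable because the vertical (or horizontal) inverse turns it into the top-left corner governed by the double source map, so the filling condition applies just as before. From $z f h^{-1} v^{-1}=[B]\in J_\circledast(\B)$ we obtain $\overline z\,\overline f=\overline v\,\overline h=\phi([v,h])$, whence $\phi([v,h])^{-1}=\overline f^{-1}\overline z^{-1}=\overline{f^{-1}}\,\overline{z^{-1}}=\phi([f^{-1},z^{-1}])$, i.e. $[v,h]^{-1}=[f^{-1},z^{-1}]$, the vertex $t(f)=r(z)$ guaranteeing $(f^{-1},z^{-1})\in\Vc\pfibrado{b}{l}\Hc$. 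The only steps requiring care — and hence the main, if modest, obstacle — are confirming that the filling condition is available at the top-right and bottom-left corners and keeping the incidence relations at the four vertices of each filling box straight; no associativity or unit verification is needed, as these are inherited through the isomorphism $\phi$.
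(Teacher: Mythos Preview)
Your argument is correct and is exactly the verification the paper leaves implicit: the remark in the paper is stated without proof, simply recording the groupoid structure transported along the bijection $\phi$ of Proposition~\ref{diagofibr}. Your computations of source, target, unit, product and inverse via $\phi$ are the natural way to justify the displayed formulas, and the box relations $[A]=xgy^{-1}h^{-1}\in J_\circledast(\B)$ are used precisely as the paper uses them elsewhere (e.g.\ in the proof of Proposition~\ref{diagofibr}).
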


Now we recall a lemma very useful for our purposes.
\begin{lema}\cite[Lemma 3.8]{AN3}\label{tecnico} Let $(\B; \Vc, \Hc; \Pc)$ be a slim double
groupoid that satisfies the filling condition. Let  $f \in \Vc$ and  $x \in \Hc$ such that:
\begin{itemize}
    \item $l(x) = b(f)$ and $t(f) = r(x)$,
    \item  $fx  \in J_\circledast(\B) \subset \Vc \circledast
    \Hc$.
\end{itemize} Then there exists $E \in \Ec(\B)$ such that $E = \begin{matrix} f
\hspace{-0.2cm} &
\begin{tabular}{|p{0,1cm}||} \hhline{|=||} \\ \hline \end{tabular}
\\  & x\end{matrix}$.
\qed
\end{lema}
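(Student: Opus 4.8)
The plan is to read the two endpoint conditions as a framing statement and then realize the desired box via the filling condition, using the algebraic hypothesis $fx\in J_{\circledast}(\B)$ to force the remaining two sides to be identities. Set $p:=t(f)=r(x)$. A direct check shows that $l(x)=b(f)$ and $t(f)=r(x)$ say exactly that the quadruple with top $\id_p$, left $f$, right $\id_p$ and bottom $x$ is a legitimate element of $\square(\Vc,\Hc)$; in particular it is the frame of a \emph{core} element. Since $\B$ is slim, at most one box of $\B$ has this frame, so it suffices to \emph{produce} one box with left $f$, bottom $x$ and identities on top and right. I would also record the dictionary in the opposite direction: for any $F\in\Ec(\B)$ one has $[F]=\id_p\,\id_p\,b(F)^{-1}l(F)^{-1}=(l(F)\,b(F))^{-1}\in J_{\circledast}(\B)$, so the assignment $F\mapsto(l(F),b(F))$ already lands in the set of compatible pairs $(f,x)$ with $fx\in J_{\circledast}(\B)$. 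The lemma is precisely the surjectivity of this assignment.

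First I would complete the bottom-left corner. The sides $f$ and $x$ share the vertex $b(f)=l(x)$, so by the filling condition (any corner of a box can be completed, exactly as in the proof of Proposition \ref{diagofibr}) there is a box $B\in\B$ with $l(B)=f$ and $b(B)=x$; write $x':=t(B)\in\Hc$ and $g':=r(B)\in\Vc$ for its remaining two sides. By definition $[B]=x'\,g'\,x^{-1}f^{-1}\in J_{\circledast}(\B)$, and since $J_{\circledast}(\B)$ is a subgroupoid of $\Vc\circledast\Hc$ (indeed a normal group bundle, \cite[Lemma 3.5]{AN3}) which contains $fx$ by hypothesis, it contains the product $[B]\,(fx)=x'\,g'$. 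Thus the residual top and right of $B$ themselves form an element of $J_{\circledast}(\B)$, now of horizontal--vertical type, and the goal becomes to \emph{cap off} $x'$ and $g'$, that is, to modify $B$ into a box with the same left $f$ and bottom $x$ but with identity top and right.

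To perform the capping I would argue by induction on the length $n$ of an expression $fx=[A_1]^{\epsilon_1}\cdots[A_n]^{\epsilon_n}$ of $fx$ in the generators of $J_{\circledast}(\B)$. For $n=0$ one has $f=\id_p$, $x=\id_p$ and $E=\Theta_p$ works. For the inductive step I would use the filling condition, as in Proposition \ref{diagofibr}, to commute the next generator past the verticals and horizontals already accumulated; each elementary commutation completes a corner to a genuine box of $\B$, and I would track, through the core product \eqref{productcore}, a core element whose left and bottom realize the partially rewritten word. Slimness guarantees that the boxes produced at successive steps glue consistently, while normality of $J_{\circledast}(\B)$ keeps the rewritten word inside $J_{\circledast}(\B)$, so the process closes up at $p$. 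I expect the main obstacle to be exactly this bookkeeping: verifying that each commuting move can be carried out inside the core groupoid $\Ec(\B)$ and that the accumulated core elements compose under \eqref{productcore} to a single core box with left $f$ and bottom $x$. Once such an $E$ is produced, slimness identifies it with the unique box carrying the frame of the first paragraph, which completes the proof.
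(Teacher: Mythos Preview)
The paper does not give its own proof of this lemma: it is quoted verbatim from \cite[Lemma~3.8]{AN3} and closed with \qed. So there is no argument here to compare yours against; any assessment must be of your sketch on its own terms.

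Your outline has the right shape --- fill a corner, track the residual sides inside $J_{\circledast}(\B)$, and induct on the length of a word in the generators $[A]$ --- but the inductive mechanism is not actually set up. Two concrete issues. First, after filling the bottom-left corner to obtain $B$ with $l(B)=f$, $b(B)=x$, $t(B)=x'$, $r(B)=g'$, you correctly get $x'g'\in J_{\circledast}(\B)$, but this is the \emph{mirror} problem (an $\Hc$--$\Vc$ loop rather than a $\Vc$--$\Hc$ one) and, more to the point, its word length is $n{+}1$: from $[B]=x'g'x^{-1}f^{-1}$ one has $x'g'=[B]\,(fx)$. So this step does not decrease the induction parameter, and iterating it alternates between the two mirror problems without terminating. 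Second, the ``commute the next generator past the accumulated letters'' move you describe is really the rewriting $\overline{d_i}\,\overline{d_{i+1}}=\overline{f}\,\overline{y}$ that holds in the \emph{quotient} $\D(\B)$, not an identity in $\Vc\circledast\Hc$; using it to produce the core box is dangerously close to invoking the equivalence \eqref{eq:tecnica}, which is exactly what this lemma is meant to justify.

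What would make the induction go through is to process the word $[A_1]^{\epsilon_1}\cdots[A_n]^{\epsilon_n}$ from one end, at each step absorbing a single generator into a growing core box via the core product \eqref{productcore} (using filling to manufacture the auxiliary boxes needed to align sides), and checking that the remaining word stays in $J_{\circledast}(\B)$ with strictly smaller length. That is presumably what \cite{AN3} does; your proposal gestures at it but does not carry it out, and as written the induction parameter is not shown to decrease.
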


By this lemma, we have an alternative description of $\sim_\B$.
Indeed, $(f_1, x_1)\sim_\B(f_2, x_2)$ if and only if
$f_1x_1x_2^{-1}f_2^{-1} \in J_\circledast(\B)$. Because
$J_\circledast(\B)$  is a normal subgroupoid, we have that $(f_1,
x_1)\sim_\B(f_2, x_2)$ if and only if $f_2^{-1}f_1x_1x_2^{-1} \in
J_\circledast(\B)$. Hence
\begin{equation} \label{eq:tecnica}
(f_1, x_1)\sim_\B(f_2, x_2) \quad \Longleftrightarrow\qquad \text{there exist} \quad E =
{\tiny\begin{matrix} f_2^{-1}f_1 \hspace{-0.2cm} &
\begin{tabular}{|p{0,1cm}||} \hhline{|=||} \\ \hline \end{tabular}
\\  & x_1x_2^{-1}\end{matrix}} \quad \in \B.
\end{equation}
Thus, the graph of the relation $\sim_\B$ is
\begin{align*}
R = \{(f_1, x_1, f_2, x_2) \in (\Vc \pfibrado{b}{l}\Hc)
\pfibrado{\eta}{\eta} (\Vc \pfibrado{b}{l}\Hc) |  \;\exists E\in
 \Ec(\B) , \; E = {\tiny\begin{matrix} f_2^{-1}f_1
\hspace{-0.2cm} &
\begin{tabular}{|p{0,1cm}||} \hhline{|=||} \\ \hline \end{tabular}
\\  & x_1x_2^{-1}\end{matrix}}\}
\end{align*}
where
\begin{equation*}
\eta: \Vc \pfibrado{b}{l}\Hc \to \Pc \times \Pc, \quad (f, x)
\mapsto (t(f), r(x)).
\end{equation*}
We conclude that the relation $\sim_\B$ is determined by the core
groupoid of $\B$.
\begin{lema}\label{correspondence}Let $(\B; \Vc, \Hc; \Pc)$ be a slim
double groupoid satisfying the filling condition.
If $(f_1,x_1),(f_2,x_2)  \in \Vc \pfibrado{b}{l}\Hc$, then
$(f_1,\;x_1)\sim_\B(f_2,\; x_2)$ if and only if there exist $A, B
\in \B$ such that $$A =
\begin{matrix} \quad x \quad \\ f_1 \,\, \boxe \,\, g \\
\quad x_1 \quad \end{matrix}, \quad \text{and} \quad B =
\begin{matrix} \quad x \quad \\ f_2 \,\, \boxe \,\, g \\
\quad x_2 \quad \end{matrix}.$$
\end{lema}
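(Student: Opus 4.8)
The plan is to prove both implications, using the evaluation map $A \mapsto [A] \in \Vc\circledast\Hc$ and the reformulation \eqref{eq:tecnica} of $\sim_\B$ in terms of the core groupoid. Throughout I read off the sides of a box from its position in $\square(\Vc,\Hc)$.

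For the ``if'' direction, suppose $A$ and $B$ are as in the statement. Since both boxes lie in $\square(\Vc,\Hc)$, reading off the corners gives $t(f_1) = l(x) = t(f_2)$ and $r(x_1) = b(g) = r(x_2)$, so the endpoint conditions defining $\sim_\B$ hold. Both $[A] = xg\,x_1^{-1}f_1^{-1}$ and $[B] = xg\,x_2^{-1}f_2^{-1}$ are, by definition, generators of $J_\circledast(\B)$, and they are loops based at the common point $tl(A) = t(f_1) = t(f_2)$. Hence their product is defined in the group bundle $J_\circledast(\B)$, and a one-line cancellation in the free product yields $[A]^{-1}[B] = f_1 x_1 x_2^{-1} f_2^{-1}$. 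Thus $f_1 x_1 x_2^{-1} f_2^{-1} \in J_\circledast(\B)$, which together with the endpoint conditions is exactly $(f_1,x_1)\sim_\B(f_2,x_2)$.

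For the ``only if'' direction, assume $(f_1,x_1)\sim_\B(f_2,x_2)$. By \eqref{eq:tecnica} there is a core box $E \in \Ec(\B)$ with left side $f_2^{-1}f_1$ and bottom side $x_1x_2^{-1}$ (top and right sides being identities). The idea is to first manufacture an auxiliary box $K$ having an identity top side, left side $f_1$, right side $f_2$ and bottom side $x_1x_2^{-1}$, and then recover $A$ and $B$ from it. Concretely, one checks $t(E) = \id_{b(f_2)} = b(\idd(f_2))$, so the vertical product $K := \begin{matrix}\idd(f_2)\\ E\end{matrix}$ is defined; the vertical-product rule then gives $t(K)=\id_{t(f_2)}$, $b(K) = x_1x_2^{-1}$, $l(K) = f_2\,(f_2^{-1}f_1) = f_1$ and $r(K) = f_2$. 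Next, using the filling condition (which, after a vertical inversion of boxes, lets one prescribe the left and bottom sides) complete the corner with left side $f_2$ and bottom side $x_2$ to a box $B$, and set $x := t(B)$, $g := r(B)$. Since $r(K) = f_2 = l(B)$, the horizontal product $A := \{K B\}$ is defined, and the horizontal-product rule gives $t(A) = \id_{t(f_2)}\cdot x = x$, $r(A) = g$, $b(A) = (x_1x_2^{-1})\,x_2 = x_1$ and $l(A) = f_1$. Thus $A$ and $B$ share the top side $x$ and right side $g$ and carry the prescribed left and bottom sides, as required.

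The ``if'' direction is a routine computation in $\Vc\circledast\Hc$; the real work is the construction in the ``only if'' direction. The main obstacle is to produce two boxes with the \emph{exact} prescribed sides, not merely $\sim_\B$-equivalent ones: a single use of the filling condition pins down only two adjacent sides, so the core box $E$ must be used to absorb the discrepancy $f_2^{-1}f_1$ on the left and $x_1x_2^{-1}$ on the bottom. The delicate points are verifying that the vertical product defining $K$ is genuinely composable (the endpoint bookkeeping $t(E)=\id_{b(f_2)}$, using $t(f_2^{-1}f_1)=b(f_2)$) and that the filling condition may legitimately be applied to prescribe the left and bottom of $B$ rather than its top and left; both reduce, via horizontal/vertical inversion of boxes, to the double-source surjectivity that constitutes the filling condition.
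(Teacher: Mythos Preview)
Your proof is correct and follows essentially the same approach as the paper: the ``if'' direction is identical (compute $[A]^{-1}[B]$ in $J_\circledast(\B)$), and for ``only if'' both arguments invoke \eqref{eq:tecnica} to obtain the core box $E$, fill one corner to produce $B$, and then manufacture $A$ by pasting $E$ against $B$ with suitable identity boxes. The only cosmetic difference is that the paper assembles the $2\times 2$ block $\left\{\begin{smallmatrix} E^v & \idd(x_2)\\ \idd(f_2) & B' \end{smallmatrix}\right\}$ and takes vertical inverses at the end, whereas you first form $K=\left\{\begin{smallmatrix}\idd(f_2)\\ E\end{smallmatrix}\right\}$ and then take the horizontal product $\{KB\}$; by the interchange law these are the same box.
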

\begin{proof} In fact, if $$A =
\begin{matrix} \quad x \quad \\ f_1 \,\, \boxe \,\, g \\
\quad x_1 \quad \end{matrix}, \quad \text{and} \quad B =
\begin{matrix} \quad x \quad \\ f_2 \,\, \boxe \,\, g \\
\quad x_2 \quad \end{matrix} \quad \text{are in} \quad \B, $$ then
$xgx_1^{-1} f_1^{-1} \in J_\circledast(\B)$ and
$xgx_2^{-1}f_2^{-1} \in J_\circledast(\B)$, taking inverse of the
first and composing, it follows that $f_1x_1x_2^{-1}f_2^{-1} \in
J_\circledast(\B)$, \emph{i.~e.} $(f_1,\;x_1)\sim_\B(f_2,\; x_2).$

Reciprocally, if $(f_1,\;x_1)\sim_\B(f_2,\; x_2)$ then
by (\ref{eq:tecnica}), there is $E \in \Ec(\B)$ such that
$E = \begin{matrix} f_2^{-1}f_1 \hspace{-0.2cm} &
\begin{tabular}{|p{0,1cm}||} \hhline{|=||} \\ \hline \end{tabular}
\\  & x_1x_2^{-1}\end{matrix}.$
The filling condition guarantees that given $x_2 \in \Hc$ and $f_2 \in
\Vc$ with $l(x_2) = b(f_2)$, there is a box $B' \in \B$ with
$t(B')= x_2$ and $l(B') = f_2^{-1}$. Let $B'  =
\begin{matrix} \quad \;\;x_2 \quad \\ f_2^{-1} \,\, \boxe \,\, g^{-1} \\
\quad \;\;x \quad \end{matrix}$, and let
$A':= \left\{\begin{matrix} E^v & \idd(x_2) \vspace{-1pt}\\
\idd(f_2) & B' \end{matrix} \right\} =
\begin{matrix} \quad \;\; x_1\quad \\ f_1^{-1} \,\, \boxe \,\, g^{-1} \\
\quad \;\;x \quad \end{matrix}.$
Let $A$, $B$ be the vertical inverses of $A'$
and $B'$ respectively. Thus
$A = \begin{matrix} \quad x \quad \\ f_1 \,\, \boxe \,\, g \\
\quad x_1 \quad \end{matrix}$ and $B =
\begin{matrix} \quad x \quad \\ f_2 \,\, \boxe \,\, g \\
\quad x_2 \quad \end{matrix}$ are both in $\B$ and we get the result.
\end{proof}

\subsection{The core action}

We recall that a continuous map $f:X \to Y$, between two
topological spaces $X$ and $Y$, is said to be \textit{proper}
if the inverse image of a compact subset of $Y$ is compact.
\begin{definition}
A Lie groupoid $\G \tto \Pc$ is  \textit{proper} if the anchor map
is proper.
An action of a Lie groupoid $\G$ on a smooth manifold $Z$ is
\textit{proper} if  the action groupoid $\G \ltimes Z$ is proper.
\end{definition}

The following proposition is useful to decide when an action is
proper. For details and more on proper actions, see \cite{tu}.

\begin{prop}\label{prop-tu}
Let $\G \tto \Pc$ be a Lie groupoid. Let $Z$ be a smooth manifold
endowed with a left action of $\G$, then $\G$ acts properly on $Z$
iff the anchor map $(s, e): \G \ltimes Z \to Z \times Z$ is closed and $\forall z
\in Z$, the stabilizer of $z$ is compact. \qed
\end{prop}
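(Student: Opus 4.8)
The plan is to deduce the statement from the standard characterization of proper maps in point-set topology: a continuous map $f\colon X\to Y$ between locally compact Hausdorff spaces is proper if and only if it is closed and all of its fibers are compact. Since $\G\ltimes Z$ is again a Lie groupoid (as recorded in the Remark following the definition of an action), both $\G\ltimes Z$ and $Z\times Z$ are manifolds, hence locally compact and Hausdorff, so this criterion applies to the anchor map $(s,e)\colon \G\ltimes Z\to Z\times Z$, $(g,z)\mapsto(gz,z)$. By definition, the action is proper precisely when this anchor is a proper map, so everything reduces to controlling the fibers and the closedness of the anchor.

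First I would describe these fibers. The fiber of the anchor over a point $(z_1,z_2)\in Z\times Z$ is $\{(g,z_2): gz_2=z_1,\ e(g)=\epsilon(z_2)\}$; in particular, the fiber over a diagonal point $(z,z)$ is exactly the stabilizer of $z$. Thus the hypothesis that every stabilizer is compact is precisely the assertion that every fiber lying over the diagonal is compact.

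With this identification in hand, the forward implication is immediate: if the action is proper then the anchor is a proper map, hence closed (a proper map into a locally compact Hausdorff space is closed), and each stabilizer, being the preimage of a single point, is compact.

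The converse is where the real work lies, and the step I expect to be the main obstacle is promoting \emph{compact stabilizers} to \emph{all fibers compact}, since the topological criterion demands compactness of every fiber, not merely the ones over the diagonal. For this I would use that multiplication by a fixed arrow is a homeomorphism of the groupoid. Concretely, if the fiber over $(z_1,z_2)$ is nonempty, choose $g_0$ with $g_0z_2=z_1$; then $g\mapsto g_0^{-1}g$ maps this fiber onto the stabilizer of $z_2$, because $(g_0^{-1}g)z_2=g_0^{-1}(gz_2)=g_0^{-1}z_1=z_2$, and it is a homeomorphism with continuous inverse $h\mapsto g_0h$. Hence every nonempty fiber is homeomorphic to a stabilizer and so is compact by hypothesis, while empty fibers are trivially compact; thus all fibers of the anchor are compact. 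Combining this with the assumed closedness of the anchor, the topological criterion yields that the anchor is proper, that is, the action is proper.
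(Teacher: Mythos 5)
Your proof is correct, but there is nothing in the paper to compare it against: the authors state Proposition \ref{prop-tu} without proof, referring to Tu \cite{tu} for ``details and more on proper actions'', so the result is simply imported from the literature. Your route is the standard elementary one and it is complete in the setting of this paper. Since $\G \ltimes Z$ is again a Lie groupoid, both it and $Z \times Z$ are (Hausdorff, locally compact) manifolds, so the Bourbaki criterion applies: a continuous map is proper if and only if it is closed with compact fibers. (Note that the implication you actually need in the converse, namely that closed with compact fibers implies preimages of compacta are compact, holds for arbitrary topological spaces; local compactness and Hausdorffness of the target are only needed for the forward implication that proper implies closed.) Your identification of the fibers is right: the fiber of the anchor over $(z_1,z_2)$ is $\{(g,z_2) : e(g)=\epsilon(z_2),\ gz_2=z_1\}$, the fibers over diagonal points are exactly the stabilizers, and the translation $g \mapsto g_0^{-1}g$ is well defined on a nonempty fiber because $s(g)=\epsilon(gz_2)=\epsilon(z_1)=s(g_0)=e(g_0^{-1})$, carries that fiber bijectively onto the stabilizer of $z_2$, and has continuous inverse $h \mapsto g_0 h$; so compactness of stabilizers does propagate to all fibers, and combined with closedness of the anchor this yields properness. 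The one thing your argument does not address, and presumably the reason the authors cite \cite{tu} (whose subject is non-Hausdorff groupoids) rather than writing this out, is that the equivalence requires more care when the spaces involved are not Hausdorff; in the smooth Hausdorff setting of this paper, your proof is a fully adequate substitute for the citation.
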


We shall need the following proposition from \cite[Prop. 1.1]{AN3}.

\begin{prop}\label{propactioncore}Let $(\B; \Vc, \Hc; \Pc)$ be a slim double
groupoid. Define $\gamma:\B \to \Pc$,  $\gamma(A)= lb(A)$, the
bottom-left vertex of $A$.

(a). There is an action of the core groupoid $\Ec(\B)$ on $\gamma:
\B \to \Pc$ given by \begin{equation}\label{eqn:actioncore} E
\acts A : = \left\{\begin{matrix}\iddv l(A)& A \vspace{-4pt}\\ E
&\iddv b(A)\end{matrix} \right\}, \quad A\in \B, E \in \Ec.
\end{equation}

\medbreak (b). Let $B\in \B$. Then  the stabilizer $\Ec(\B)^B$ is
trivial and the orbit of $B$ is $\Oc_B = \{ A \in \B: t(A) = t(B),
r(A) = r(B) \}$. \qed \end{prop}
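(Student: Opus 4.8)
The whole argument hinges on slimness: since a box is determined by its four sides, every asserted equality of boxes reduces to equalities of edges in $\Hc$ and $\Vc$. The plan is first to record how the operation \eqref{eqn:actioncore} acts on sides, and then to read both (a) and (b) off that formula. The defining $2\times2$ array is composable exactly when $e_{\Ec}(E)=\gamma(A)$, which is precisely the fibre product $\Ec(\B)\pfibrado{e}{\gamma}\B$ underlying an action along $\gamma$. Evaluating it by the interchange law — compose the top row $\idd l(A)\cdot A=A$ first, then stack $E\cdot\idd\,b(A)$ beneath — I would obtain
$$t(E\acts A)=t(A),\quad r(E\acts A)=r(A),\quad b(E\acts A)=b(E)\,b(A),\quad l(E\acts A)=l(A)\,l(E),$$
so that a core element fixes the top and right sides and multiplies the bottom and left sides by $b(E)$ and $l(E)$ respectively.

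Part (a) then follows formally. From $l(E\acts A)=l(A)l(E)$ one gets $\gamma(E\acts A)=bl(E)=s_{\Ec}(E)$, the required compatibility along $\gamma$. As $\Theta_{\gamma(A)}$ has all four sides equal to $\id_{\gamma(A)}$, the formula makes $\Theta_{\gamma(A)}\acts A$ have the same four sides as $A$, hence $\Theta_{\gamma(A)}\acts A=A$ by slimness. For the associativity axiom I would use that the core product \eqref{productcore} is literally the case $E\circ F=E\acts F$ of the same array, so the side formula also gives $b(E\circ F)=b(E)b(F)$ and $l(E\circ F)=l(F)l(E)$. Comparing sides, $(E\circ F)\acts A$ and $E\acts(F\acts A)$ both have top $t(A)$, right $r(A)$, bottom $b(E)b(F)b(A)$ and left $l(A)l(F)l(E)$, the bottoms agreeing by associativity in $\Hc$ and the lefts by associativity in $\Vc$; slimness finishes it.

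For (b), the stabilizer is immediate from the same formula: if $E\acts B=B$, comparing bottom and left sides forces $b(E)b(B)=b(B)$ and $l(B)l(E)=l(B)$, so $b(E)$ and $l(E)$ are identities; together with $t(E),r(E)\in\Pc$ this makes all four sides of $E$ identities, whence $E=\Theta_{\gamma(B)}$ by slimness. The inclusion (orbit of $B$) $\subseteq\Oc_B$ is likewise immediate, since the formula preserves $t$ and $r$.

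The one step needing a genuine construction, and the point I expect to be the main obstacle, is the reverse inclusion $\Oc_B\subseteq$ (orbit of $B$): given $A$ with $t(A)=t(B)$ and $r(A)=r(B)$, one must produce a core $E$ with $E\acts B=A$. The sides $b(E)=b(A)b(B)^{-1}$ and $l(E)=l(B)^{-1}l(A)$ are forced and, using $t(A)=t(B)$ and $r(A)=r(B)$, have matching endpoints; but in a bare slim double groupoid one may not simply postulate a box with prescribed sides — it must be assembled from $A,B$ by the operations of $\B$. I would take
$$E:=\left\{\begin{matrix}B^v\\ A\end{matrix}\right\}\cdot\idd\,b(B)^{-1},$$
which is legitimate because $t(B)=t(A)$ makes the vertical product defined and its right side $\id_{br(B)}=\id_{l(b(B)^{-1})}$ matches the left side of $\idd\,b(B)^{-1}$. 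A short side computation gives $t(E),r(E)\in\Pc$, so $E\in\Ec(\B)$, with $b(E)=b(A)b(B)^{-1}$ and $l(E)=l(B)^{-1}l(A)$; feeding these into the side formula and using slimness yields $E\acts B=A$, completing the orbit description and the proposition.
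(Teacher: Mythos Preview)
Your argument is correct. The paper does not actually prove this proposition; it quotes it verbatim from \cite[Prop.~1.1]{AN3} and closes with a \qed, so there is no in-paper proof to compare against. Your side-by-side computation of the edges of $E\acts A$, the reduction of the action axioms to slimness via $E\circ F=E\acts F$, and the explicit construction $E=\bigl\{\begin{smallmatrix}B^v\\ A\end{smallmatrix}\bigr\}\cdot\idd\,b(B)^{-1}$ for the reverse orbit inclusion are all sound and would serve as a self-contained proof; the only remark is that part (a) in fact holds without slimness (the associativity can be checked by a direct $3\times 2$ interchange), so your reliance on slimness there, while valid under the stated hypotheses, is stronger than necessary.
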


The above results enable us to state and proof the following
lemma.

\begin{lema}\label{actioncore} Let $(\B;\Vc,\Hc;\Pc)$ be a slim (discrete)
double groupoid. Define the map $\eta : \Vc \pfibrado{b}{l} \Hc \to
\Pc$ by $\eta(f,x)=b(f) = l(x)$. Then $\Ec(\B)$ acts on $\eta$ by
\begin{equation}\label{eqn:actioncoreexplicxita}
E \rhd (f,x) = ( f \; l(E), b(E) \;x), \quad\text{ when }\eta
(f,x) = e_{\Ec(\B)}(E).
\end{equation}

The quotient space $\Vc \pfibrado{b}{l} \Hc / \Ec(\B)$ coincides
with $\D(\B)$ .

\end{lema}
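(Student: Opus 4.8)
The plan is to verify that \eqref{eqn:actioncoreexplicxita} genuinely defines an action of $\Ec(\B)$ on $\eta$, and then to identify the orbit space with $\D(\B)$ by showing that the orbit equivalence relation coincides with $\sim_\B$, after which Proposition \ref{diagofibr} gives the identification. First I would check that the formula is well defined: given $E\in\Ec(\B)$ with $e_{\Ec(\B)}(E)=tr(E)=\eta(f,x)=b(f)=l(x)$, the sides $f\,l(E)$ and $b(E)\,x$ are composable in $\Vc$ and $\Hc$ respectively (here $l(E)\in\Vc$, $b(E)\in\Hc$ are the left and bottom sides of the box $E$), and that the new pair again lies in $\Vc\pfibrado{b}{l}\Hc$, i.e. $b(f\,l(E))=l(b(E)\,x)$. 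Because $E$ is a core box, its four sides are constrained so that $b\,l(E)=s_{\Ec}(E)$ and the matching of the remaining vertices is automatic; this is a short diagram chase using the box-composition conventions. The action axioms $\Theta_p\rhd(f,x)=(f,x)$ and $E\rhd(F\rhd(f,x))=(E\circ F)\rhd(f,x)$ then follow from the definition of the core multiplication \eqref{productcore} together with associativity in $\Vc$ and $\Hc$.

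Next I would compute the orbit relation explicitly. Two pairs $(f_1,x_1)$ and $(f_2,x_2)$ lie in the same $\Ec(\B)$-orbit precisely when there is $E\in\Ec(\B)$ with $f_2=f_1\,l(E)$ and $x_2=b(E)^{-1}x_1$ (solving \eqref{eqn:actioncoreexplicxita}), which rearranges to $l(E)=f_1^{-1}f_2$ and $b(E)=x_1x_2^{-1}$. Comparing with the description \eqref{eq:tecnica} of $\sim_\B$, this says exactly that there exists a core box $E$ with left side $f_2^{-1}f_1$ (equivalently $f_1^{-1}f_2$, up to the orientation convention) and bottom side $x_1x_2^{-1}$. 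Thus the orbit relation is identical to $\sim_\B$, so $\Vc\pfibrado{b}{l}\Hc/\Ec(\B)=\Vc\pfibrado{b}{l}\Hc/\sim_\B$ as sets. The bijection $\phi$ of Proposition \ref{diagofibr} then identifies this quotient with $\D(\B)$, and one checks it is compatible with the groupoid structure of Remark \ref{estructuradiagonal}.

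The step I expect to be the main obstacle is matching the orientation conventions precisely: the core multiplication \eqref{productcore}, the action \eqref{eqn:actioncore} of Proposition \ref{propactioncore}, and the sublemma \eqref{eq:tecnica} each read off the sides of a core box in a fixed way, and one must be careful that the $f^{-1}$ and $x^{-1}$ appearing in \eqref{eq:tecnica} correspond to the same inverses that arise when inverting the action formula. Concretely, I would use Proposition \ref{propactioncore}(b) to guarantee that orbits are exactly the fibers over $(t(f),r(x))$ that $\sim_\B$ sees, so that no two distinct orbits get merged and no orbit is split; freeness of the core action there ($\Ec(\B)^B$ trivial) ensures the map $\phi$ descends without collapsing, matching the injectivity already proved in Proposition \ref{diagofibr}. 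Once the set-level identification and the side-bookkeeping are settled, transporting the groupoid operations through $\phi$ is routine and follows from Remark \ref{estructuradiagonal}.
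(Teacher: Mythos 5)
Your overall strategy matches the paper's: first verify the action axioms directly from the core multiplication \eqref{productcore} (the paper's computation uses exactly the identities $l(E\circ F)=l(F)l(E)$ and $b(E\circ F)=b(E)b(F)$), then show the orbit relation equals $\sim_\B$ and invoke Proposition \ref{diagofibr}. The only methodological difference is in the second step: the paper goes through Lemma \ref{correspondence} together with Proposition \ref{propactioncore} (two pairs are $\sim_\B$-related iff they are the (left, bottom) sides of two boxes of $\B$ sharing top and right sides, and such boxes differ exactly by the core action \eqref{eqn:actioncore} on $\B$), whereas you compare the orbit relation directly with \eqref{eq:tecnica}. That shortcut is legitimate and, if executed correctly, shorter than the paper's route.

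However, your explicit orbit computation goes wrong precisely at the point you flagged as the main obstacle, and the appeal to ``orientation convention'' does not repair it. Solving $(f_2,x_2)=E\rhd(f_1,x_1)$ gives $f_2=f_1\,l(E)$ and $x_2=b(E)\,x_1$, hence $b(E)=x_2x_1^{-1}$, not $x_1x_2^{-1}$ as you wrote; your mixed pair $(l(E),b(E))=(f_1^{-1}f_2,\;x_1x_2^{-1})$ corresponds, via Lemma \ref{tecnico}, to the condition $f_1^{-1}f_2\,x_1x_2^{-1}\in J_\circledast(\B)$, which is \emph{not} equivalent to $\sim_\B$ in general: $J_\circledast(\B)$ is a normal group bundle, closed under inversion and conjugation, but not under inverting only half of a word, so one cannot swap $f_2^{-1}f_1$ for $f_1^{-1}f_2$ while keeping $x_1x_2^{-1}$ fixed. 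The correct bookkeeping inverts both sides simultaneously: either solve the action equation in the other direction, $(f_1,x_1)=E\rhd(f_2,x_2)$, which yields $l(E)=f_2^{-1}f_1$ and $b(E)=x_1x_2^{-1}$ and matches \eqref{eq:tecnica} on the nose; or pass to the core inverse $E^{(-1)}$, whose left and bottom sides are $l(E)^{-1}$ and $b(E)^{-1}$ by \eqref{productcore}. Two smaller points: your parenthetical reading of Proposition \ref{propactioncore}(b), that orbits are ``exactly the fibers over $(t(f),r(x))$'', is false for the action on $\Vc\pfibrado{b}{l}\Hc$ (orbits sit inside those fibers but generally properly; equality is the coarse case) --- that proposition describes orbits of the action on $\B$ itself, which is how the paper uses it; and freeness of the action plays no role at this set-theoretic stage --- it is needed only later, for the manifold structure in Theorem \ref{diagonalisLie}.
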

\pf Since $b (f \; l(E)) = bl(E) = s_{\Ec(\B)}(E)$ and
$l(b(E) \; x) = lb(E) = s_{\Ec(\B)}(E)$ the map
$\rhd : \Ec(\B) \pfibrado{e_{\Ec(\B)}}{\eta}(\Vc \pfibrado{b}{l} \Hc)
 \to \Vc \pfibrado{b}{l} \Hc$  is well
defined. That $\rhd$ is an action is straightforward, in fact,
\begin{align*}
(E \circ F) \rhd (f, x)&=(f\; l(E \circ F),b(E \circ F) \; x)
= ( f \; l(F)l(E),b(E)b(F) \; x) \\
  &= E \rhd (f \; l(F),t(F) \, x) = E \rhd ( F \rhd (f, x)).
  \end{align*}
Also, $ \eta(E \rhd (f,x)) = \eta(f\;l(E), b(E)\;x) = l(b(E)x) =
lb(E))=e_{\Ec(\B)}(E)$.

For the second part, if $(f, x)\sim_{\B} (g, y)$, then there are
$A,B \in \B$ such that \cajaSmall{$A$}{$z$}{$h$}{$x$}{$f$} and
\cajaSmall{$B$}{$z$}{$h$}{$y$}{$g$}, see Lemma
\ref{correspondence}. Then, by Proposition \ref{propactioncore},
there exists a box $E \in \Ec(\B)$ such that $A = E
\rightharpoondown B$, in consequence, $x = b(E)y$ and $f = g\,
l(E)$.

Conversely, if $A, \;B \in \B$ and there exists $E \in \Ec(\B)$
with $b(A) = b(E)b(B)$ and $l(A) = l(B) l(E)$, then the
boxes $E \rightharpoondown B$ and $B$ have the same
top and right sides. By Lemma
\ref{correspondence}, we have $(l(E \rightharpoondown B), b(E
\rightharpoondown B)) \sim_{\B} (l(B), b(B))$, that is $(l(A),
b(A))\sim_{\B}(l(B), b(B))$. From this we conclude that given $(f,
x)$, $(g, y) \in \Vc \pfibrado{b}{l} \Hc$,  $(f, x)\sim_{\B}(g,
y)$ iff $\exists E \in \Ec(\B)$ with $f = g\;l(E)$ and $x = b(E)
y$. Thus the quotient coincides with the diagonal groupoid. \epf

The action \eqref{eqn:actioncoreexplicxita} will be called the
\textit{core action} of $\Ec(\B)$ on $\Vc \pfibrado{b}{l}
\Hc$. Let  $\pi:\Vc \pfibrado{b}{l} \Hc \to \Vc \pfibrado{b}{l}
\Hc/ \Ec(\B)$ be the projection  determined by
\eqref{eqn:actioncoreexplicxita}.

\begin{theorem}\label{diagonalisLie}
Let $(\B; \Vc, \Hc; \Pc)$ be a slim double Lie groupoid. If the
\textit{core action} is proper, then $\D(\B)$ is a Lie
groupoid over $\Pc$.
\end{theorem}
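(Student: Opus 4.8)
The plan is to realize $\D(\B)$ as the orbit space of the core action and then to run the quotient-manifold machinery. By Lemma \ref{actioncore} we have an identification $\D(\B) = (\Vc\pfibrado{b}{l}\Hc)/\Ec(\B)$, with $\pi$ the orbit projection, so it suffices to show that this orbit space carries a smooth structure making $\pi$ a surjective submersion, and that the operations of Remark \ref{estructuradiagonal} descend smoothly. The two hypotheses feed in here: by Proposition \ref{propactioncore}(b) every stabilizer of the core action is trivial, so the action is \emph{free}, and by assumption it is \emph{proper}. The heart of the argument is to deduce from freeness and properness that $(\Vc\pfibrado{b}{l}\Hc)/\Ec(\B)$ is a manifold.

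For this I would apply Godement's criterion to the orbit equivalence relation $R \subseteq (\Vc\pfibrado{b}{l}\Hc)\times(\Vc\pfibrado{b}{l}\Hc)$, which is precisely the image of the anchor $\chi=(s',e')$ of the action groupoid $\Ec(\B)\ltimes(\Vc\pfibrado{b}{l}\Hc)$, where $e'$ is the projection onto $\Vc\pfibrado{b}{l}\Hc$ and $s'(E,(f,x)) = E\rhd(f,x)$. Freeness makes $\chi$ injective. Moreover $\chi$ is an immersion: if a tangent vector lies in $\ker d\chi$ it lies in $\ker de'$, i.e.\ it is tangent to a fibre of $e'$, and on such a fibre $s'$ restricts to the orbit map $E\mapsto E\rhd(f,x)$; being the restriction of one structure map of the Lie groupoid $\Ec(\B)\ltimes(\Vc\pfibrado{b}{l}\Hc)$ to a fibre of the other, this map has constant rank, and an injective constant-rank map is an immersion, so the vector must vanish. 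Properness gives, via Proposition \ref{prop-tu}, that $\chi$ is a closed map (the compact-stabilizer condition holding trivially since the action is free). A proper injective immersion is a closed embedding, hence $R$ is a closed embedded submanifold, and since its first projection $R\to \Vc\pfibrado{b}{l}\Hc$ is identified with the surjective submersion $e'$, Godement's theorem supplies the smooth structure on $\D(\B)$ together with the surjective submersion $\pi$. I expect this to be the main obstacle, precisely because it is where freeness and properness have to be combined (through the constant-rank property and Proposition \ref{prop-tu}) to upgrade set-theoretic quotient to smooth quotient.

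Once $\pi$ is a surjective submersion the operations descend routinely. The source and target $s[v,h]=t(v)$ and $e[v,h]=r(h)$ satisfy $s\circ\pi = t\circ\mathrm{pr}_{\Vc}$ and $e\circ\pi = r\circ\mathrm{pr}_{\Hc}$; since these are surjective submersions and $\pi$ is a surjective submersion, $s$ and $e$ are themselves surjective submersions. The unit map factors as $\pi$ composed with the smooth map $p\mapsto(\id_p,\id_p)$, hence is smooth.

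For the inverse and the multiplication of Remark \ref{estructuradiagonal} one must fill corners of boxes in a locally smooth way. Here I would use that $\B$ is a double Lie groupoid, so the double source map $\mathbf{S}\colon\B\to\Hc\pfibrado{l}{t}\Vc$ is a surjective submersion; composing $\mathbf{S}$ with the horizontal and vertical inversions (which are diffeomorphisms of $\B$) shows that each of the four maps assigning to a box a pair of adjacent sides is a surjective submersion, and thus admits local smooth sections. Choosing such a section to fill the bottom-left corner $(v,h)$ for the inverse, and the top-right corner $(h_1,v_2)$ for the product, and combining these with local sections of $\pi$ and the smoothness of the operations of $\Vc$ and $\Hc$, one exhibits the inverse and the multiplication locally as compositions of smooth maps; since they are already well defined on the quotient by Remark \ref{estructuradiagonal}, they are globally smooth. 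Therefore $\D(\B)\tto\Pc$ is a Lie groupoid.
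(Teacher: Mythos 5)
Your proof is correct, and its skeleton coincides with the paper's: identify $\D(\B)$ with the orbit space of the core action via Lemma \ref{actioncore}, obtain freeness from slimness through Proposition \ref{propactioncore}(b), combine freeness with the properness hypothesis to get a smooth quotient on which $\pi$ is a surjective submersion, and then descend the structure maps of Remark \ref{estructuradiagonal} using local sections of $\pi$. The difference lies in how the smooth-quotient step is justified. The paper cites it as a black box, namely \cite[Theorem 3.3.1]{d} (quotients of free and proper Lie groupoid actions), whereas you prove it via Godement's criterion: freeness gives injectivity of the anchor $\chi=(s',e')$ of the action groupoid; the constant-rank property of one structure map restricted to a fibre of the other (a standard Lie groupoid fact, provable by translating with local bisections in the spirit of the proof of Lemma \ref{cudradovariedad}) upgrades injectivity to an immersion; and properness, through Proposition \ref{prop-tu}, makes $\chi$ closed, hence a closed embedding, so the graph of $\sim_\B$ is a closed embedded submanifold and Godement's theorem applies --- with Hausdorffness of $\D(\B)$ coming out explicitly from closedness of the graph. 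You also flesh out what the paper dismisses as ``clear'': smoothness of the multiplication and inversion of Remark \ref{estructuradiagonal} needs smooth local corner-filling, which you correctly extract from the double source map $\mathbf{S}$ being a surjective submersion, composed with the horizontal and vertical inversion diffeomorphisms of $\B$. Both routes are valid; the paper's is shorter by citation, while yours is self-contained and replaces the quotient theorem by classical tools.
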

\pf Since $\B$ is slim, the action
\eqref{eqn:actioncoreexplicxita} is free. Hence, if the action is
proper, then the quotient $\Vc \pfibrado{b}{l} \Hc / \Ec(\B)$ has
a unique manifold structure such that the projection $\pi:\Vc
\pfibrado{b}{l} \Hc \to \Vc \pfibrado{b}{l} \Hc / \Ec(\B)$ is a
surjective submersion \cite[Theorem 3.3.1]{d}. Thus, $\D(\B)$ is a Lie groupoid over
$\Pc$. In fact, the structure maps are described in Remark
\ref{estructuradiagonal}; using local sections of $\pi$, it is
clear that the source and target maps are surjective submersions
and that the other structural maps are smooth. \epf

Let $(\B,\Vc,\Hc,\Pc)$ be a slim double Lie groupoid. Let
\begin{align*}
\tilde{i}: \Hc \to \Vc \pfibrado{b}{l} \Hc \quad & &x \mapsto ( \id(l(x)), x) \\
\tilde{j}: \Vc \to \Vc \pfibrado{b}{l} \Hc \quad & &f \mapsto
(f,\id(b(f))),
\end{align*}
 be the canonical inclusions of $\Vc$ and $\Hc$; let also $i = \pi \circ \tilde{i}$ and $j = \pi
\circ \tilde{j}$.

\begin{lema}\label{transersality}
The maps $i$ and $j$ defined above are transversal at the
identities.
\end{lema}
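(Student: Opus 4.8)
The plan is to unwind the definitions of $i$ and $j$ and verify the transversality condition at each identity $\id_p \in \D(\B)$. By definition $i = \pi \circ \tilde\imath$ and $j = \pi \circ \tilde\jmath$, where $\pi: \Vc \pfibrado{b}{l} \Hc \to \Vc \pfibrado{b}{l} \Hc / \Ec(\B) = \D(\B)$ is the quotient submersion coming from the core action, and $\tilde\imath, \tilde\jmath$ are the canonical inclusions. Transversality at the identities means: for every $p \in \Pc$, writing $\id_p \in \D(\B)$ for the common image $i(\id_p^{\Hc}) = j(\id_p^{\Vc}) = \pi(\id_p, \id_p)$, we must show
\begin{equation*}
(T_{\id_p}i)(T_{\id_p}\Hc) + (T_{\id_p}j)(T_{\id_p}\Vc) = T_{\id_p}\D(\B).
\end{equation*}
First I would fix $p$ and compute the tangent space $T_{\id_p}\D(\B)$ via the submersion $\pi$. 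Since $\pi$ is a surjective submersion (Theorem \ref{diagonalisLie}, valid because the core action is proper), $T_{\id_p}\D(\B)$ is the image under $T_{(\id_p,\id_p)}\pi$ of $T_{(\id_p,\id_p)}(\Vc\pfibrado{b}{l}\Hc)$, with kernel exactly the tangent to the $\Ec(\B)$-orbit.

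**Reduce to a statement about $\Vc \pfibrado{b}{l} \Hc$.** The key observation is that $\tilde\imath$ and $\tilde\jmath$ already span the fibre product at the identities, in the following sense. Using the description in Proposition 2.5 of \cite{Lang} (as quoted above), the tangent space $T_{(\id_p,\id_p)}(\Vc\pfibrado{b}{l}\Hc)$ consists of pairs $(X,Y) \in T_{\id_p}\Vc \times T_{\id_p}\Hc$ with $(T\,b)(X) = (T\,l)(Y)$. The inclusion $\tilde\jmath$ sends $f \mapsto (f, \id(b(f)))$, so its differential at the identity produces vectors of the form $(X, (T\id_\Hc)(T b)(X))$; dually $\tilde\imath$ produces $((T\id_\Vc)(Tl)(Y), Y)$. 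I would show that these two images together span the whole fibre-product tangent space: given any $(X,Y)$ with $(Tb)(X)=(Tl)(Y)=:Z$, one writes
\begin{equation*}
(X,Y) = \big(X,(T\id_\Hc)(Z)\big) + \big((T\id_\Vc)(Z), Y\big) - \big((T\id_\Vc)(Z),(T\id_\Hc)(Z)\big),
\end{equation*}
where the first summand lies in $\operatorname{im}(T\tilde\jmath)$, the second in $\operatorname{im}(T\tilde\imath)$, and the third (the diagonal correction along the unit section) also lies in both images. Thus $(T\tilde\imath)(T_{\id_p}\Hc) + (T\tilde\jmath)(T_{\id_p}\Vc) = T_{(\id_p,\id_p)}(\Vc\pfibrado{b}{l}\Hc)$ \emph{already before quotienting}.

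**Push forward through $\pi$.** Applying the surjective linear map $T_{(\id_p,\id_p)}\pi$ to this equality and using $i = \pi\circ\tilde\imath$, $j=\pi\circ\tilde\jmath$, I get
\begin{equation*}
(T_{\id_p}i)(T_{\id_p}\Hc) + (T_{\id_p}j)(T_{\id_p}\Vc) = (T_{(\id_p,\id_p)}\pi)\big(T_{(\id_p,\id_p)}(\Vc\pfibrado{b}{l}\Hc)\big) = T_{\id_p}\D(\B),
\end{equation*}
the last equality because $\pi$ is a submersion. This is exactly the required transversality. The decomposition step is elementary linear algebra once the tangent space of the fibre product is correctly identified, so the technical content reduces to (i) correctly writing the differentials of $\tilde\imath, \tilde\jmath$ in terms of the unit sections of $\Vc$ and $\Hc$, and (ii) invoking that $\pi$ is a submersion. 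The main obstacle I anticipate is purely bookkeeping: checking that the ``diagonal correction'' term genuinely lies in both $\operatorname{im}(T\tilde\imath)$ and $\operatorname{im}(T\tilde\jmath)$, which rests on the identity $b\circ\id_\Vc = \id_\Pc = l\circ\id_\Hc$ at the level of base points and its differential. No properness or global hypothesis beyond the existence of the quotient manifold structure (hence the smoothness of $\pi$ as a submersion) is needed for this particular lemma.
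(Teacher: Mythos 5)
Your proof is correct and follows essentially the same route as the paper: both arguments lift everything to $\Vc \pfibrado{b}{l} \Hc$ via the submersion $\pi$, use the explicit differentials of $\tilde\imath$ and $\tilde\jmath$ together with the identities $l\circ\id_{\Hc} = \id_{\Pc} = b\circ\id_{\Vc}$, and conclude by pushing the spanning statement forward through $T\pi$. Your symmetric three-term decomposition with the diagonal correction is just a rewriting of the paper's choice $Y = U$, $X = W - (T_p\id_{\Hc})(T_{\id_p}b)(U)$, which absorbs that correction into one of the two summands.
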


\pf Let $p \in \Pc$. Take a tangent vector $Z \in T_{[\id_p,
\id_p]} \D(\B)$, where $[\id_p, \id_p] = \pi(\id_p, \id_p)$. Since
$\pi$ is a surjective submersion, there is $(U, W) \in T_{(\id_p,
\id_p)}(\Vc \pfibrado{b}{l} \Hc)$ such that $T_{(\id_p, \id_p)}
\pi (U, W) = Z$. Choose
\begin{align*}
Y = U \in T_{\id_p} \Vc, \qquad X = W - (T_p \id_{\Hc})(T_{ \id_p}
b)(U) \in T_{ \id_p} \Hc.
\end{align*}
It is clear that $$(T_{ \id_p} \tilde{j}\;)(Y) = (U, (T_p
\id_{\Hc})(T_{ \id_p} b(U)))\text{ and }(T_{ \id_p}
\tilde{i}\;)(X) = ((T_p \id_{\Vc})(T_{ \id_p} l)(X), X).$$ We
compute
\begin{align*}
(T_p \id_{\Vc})(T_{\id_p} l)(X) &=(T_p
\id_{\Vc})(T_{ \id_p} l)(W) -
(T_p \id_{\Vc})(T_{\id_p} l)(T_p \id_{\Hc})(T_{\id_p} b)(U)\\
&=(T_p \id_{\Vc})(T_{ \id_p} l)(W) - (T_p \id_{\Vc})(T_{\id_p}
b)(U) =0;
\end{align*}
then $(T_{\id_p} \tilde{i}\;)(X) = (0, X)$. In consequence
we have

\begin{align*}
(T_{\id_p} \tilde{j}\;)(Y) + (T_{\id_p} \tilde{i}\;)(X) &= (U, (T_p \id_{\Hc})(T_{\id_p} b)(U)) + (0, X)\\
&=(U, (T_p \id_{\Hc})(T_{\id_p} b)(U)+ W - (T_p \id_{\Hc})(T_{\id_p} b)(U))\\
&=(U, W).
\end{align*}
Then if we apply  $T_{(\id_p,\id_p)} \pi$ to both
sides of the above equation we arrive to
$$(T_{\id_p} j)(Y) + (T_{\id_p} i) (X) = Z,$$
that is, the maps $i$ and $j$ are transversal at the identities.
\epf

\medbreak
Let $(\D, j, i)$ be a $(\Vc, \Hc)$-factorization. The underlying
manifold to the core groupoid of $\B = \square(\D, j, i)$ is $\Vc^{op}
\pfibrado{j}{i} \Hc = \{(h, y) \; | \; j(h^{-1}) = i(y)\}$.
The core action on $\Vc \pfibrado{b}{l} \Hc$ is given by
\begin{equation}\label{actioncorefactorization}
(h, y) \rhd (f, x) = (fh, yx) \quad \text{when} \quad \eta(f,x)=
t(h) = r(y);
\end{equation}
the proof of \eqref{actioncorefactorization} follows
from the definition \eqref{eqn:actioncoreexplicxita}.
\begin{lema}\label{actdougroupdiag}
The core action \eqref{actioncorefactorization} is proper.
\end{lema}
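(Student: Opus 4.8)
The plan is to verify properness through Proposition~\ref{prop-tu} applied to the action groupoid $\Ec(\B) \ltimes Z$, where $Z := \Vc \pfibrado{b}{l} \Hc$ and the anchor is the map $(s', e') : \Ec(\B) \ltimes Z \to Z \times Z$, $((h,y),(f,x)) \mapsto ((fh, yx),(f,x))$. First I would dispose of the stabilizers: if $(h,y) \rhd (f,x) = (f,x)$, then $fh = f$ and $yx = x$ in $\Vc$ and $\Hc$ respectively, so cancelling forces $h = \id_{b(f)}$ and $y = \id_{l(x)}$. Hence every stabilizer is trivial, in particular compact; this also re-proves directly that the core action \eqref{actioncorefactorization} is free, the fact already exploited in Theorem~\ref{diagonalisLie}.

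It then remains to show the anchor is closed, and the key observation is that the acting core element can be reconstructed continuously from its effect. Indeed, if $(f',x') = (h,y)\rhd(f,x) = (fh, yx)$, then necessarily $t(f) = t(f')$ and $r(x) = r(x')$, and one recovers $h = f^{-1}f'$ and $y = x'x^{-1}$, both smooth functions of $((f',x'),(f,x))$ on the relevant fiber product (the division map of a Lie groupoid is smooth). To prove closedness I would take a convergent sequence $((f'_n,x'_n),(f_n,x_n)) \to ((f',x'),(f,x))$ lying in the image of a closed set $C \subseteq \Ec(\B)\ltimes Z$, write the corresponding arrows as $((h_n,y_n),(f_n,x_n))$ with $h_n = f_n^{-1}f'_n$ and $y_n = x'_n x_n^{-1}$, and pass to the limit: by continuity of the groupoid operations $h_n \to f^{-1}f' =: h$ and $y_n \to x'x^{-1} =: y$. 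Since $\Ec(\B)$ is an embedded, hence closed (as $\D$ is Hausdorff), submanifold of $\Vc \times \Hc$, the pair $(h,y)$ lies in $\Ec(\B)$; moreover $((h,y),(f,x))$ is the limit of arrows of $C$, so it belongs to $C$, and its image under the anchor is exactly $((f',x'),(f,x))$. Thus the image of $C$ is closed.

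The genuine content, and the step I expect to be the \emph{main obstacle}, is this closedness statement, since it is where the factorization hypothesis $\D = j(\Vc)i(\Hc)$ and the embeddedness of the core really enter, the algebraic identities being routine. A cleaner alternative, which avoids Proposition~\ref{prop-tu} altogether, is to show the anchor $\Theta$ is proper directly: for $K \subseteq Z \times Z$ compact, the recovery formulas confine $h = f^{-1}f'$ and $y = x'x^{-1}$ to continuous images of the compact projections of $K$, so $(h,y)$ ranges over a compact subset $H$ of $\Ec(\B)$; then $\Theta^{-1}(K)$ is a closed subset of $(H \times \mathrm{pr}_2 K)\cap (\Ec(\B)\ltimes Z)$ and hence compact, whence $\Theta$ is proper and the action proper by definition. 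I would present whichever of the two routes reads more smoothly, but in both cases the heart of the matter is the explicit reconstruction $h = f^{-1}f'$, $y = x'x^{-1}$ together with the closedness of the core inside $\Vc \times \Hc$.
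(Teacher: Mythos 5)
Your proof is correct and follows essentially the same route as the paper: properness via Proposition~\ref{prop-tu}, with trivial (hence compact) stabilizers coming from freeness, and closedness of the anchor established by taking a convergent sequence and recovering the acting core element by division ($h_n = f_n^{-1}f'_n$, $y_n = x'_nx_n^{-1}$), exactly as the paper writes $f_n = g_n^{-1}(g_nf_n)$, $x_n = (x_ny_n)y_n^{-1}$. If anything, you are slightly more careful than the paper in justifying that the limit arrow actually lies in $\Ec(\B)\ltimes(\Vc\pfibrado{b}{l}\Hc)$ --- via closedness of the core in $\Vc\times\Hc$, which indeed follows from Hausdorffness of $\D$ (preimage of the diagonal), not from embeddedness alone --- a point the paper's proof passes over silently.
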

\pf Since the action \eqref{actioncorefactorization} is free,
in order to prove that it is proper, we only need to
check that the anchor map of the respective action groupoid
$$(s, t) : (\Vc^{op} \pfibrado{j}{i} \Hc) \ltimes (\Vc \pfibrado{b}{l}
\Hc) \to (\Vc \pfibrado{b}{l} \Hc) \times (\Vc \pfibrado{b}{l}
\Hc)$$ is closed, see Proposition \ref{prop-tu}. Let $A \subseteq
(\Vc \pfibrado{j}{i} \Hc) \ltimes (\Vc \pfibrado{b}{l} \Hc)$ be a
closed set and consider a sequence  $\{(f_n, x_n, g_n, y_n)\}_{n
\in \mathbb{N}}$ in $A$ such that the sequence $$\{ (s, t)(f_n,
x_n, g_n, y_n)\}_{n \in \mathbb{N}} = \{(g_n f_n, x_n y_n, g_n,
y_n)\}_{n \in \mathbb{N}}$$ converges to $(a, b, g, y) \in (\Vc
\pfibrado{b}{l} \Hc) \times (\Vc \pfibrado{b}{l} \Hc)$. We need to
see that $(a, b, g, y) \in (s, t) (A)$. Clearly, $g_n \underset{n
\rightarrow \infty}\longrightarrow g$, \;\;$y_n \underset{n
\rightarrow \infty} \longrightarrow y$, \;\;$g_n f_n \underset{n
\rightarrow \infty} \longrightarrow a$ and \;\;$x_n y_n
\underset{n \rightarrow \infty} \longrightarrow b$.

\medbreak Hence $(f_n, x_n, g_n, y_n) \underset{n \rightarrow
\infty} \longrightarrow (g^{-1}a, b y^{-1}, g, y)$; since $A$ is
closed, we conclude that $((g^{-1}a, b y^{-1}, g, y) \in A$. Now
$(a, b, g, y) = (s, t)(g^{-1}a, b y^{-1}, g, y) \in (s, t)(A)$ by
a direct calculation, hence $(s, t)(A)$ is closed. \epf

Finally, we arrive to our main result.

\begin{theorem}\label{categoryequivalence}
Fix $\Vc$ and $\Hc$. The assignments $\B \mapsto \D(\B)$ and $(\D,
j, i) \mapsto \square(\D, j, i)$ determine mutual category
equivalences between
\begin{enumerate}
\item [(a)]  The category of slim double Lie groupoids $(\B; \Vc, \Hc; \Pc)$
with proper core action, and

\item [(b)] The category of $(\Vc, \Hc)$-factorizations of Lie groupoids $(\D,
j, i)$ such that the maps $i$ and $j$ are transversal at the
identities.

\end{enumerate}

\end{theorem}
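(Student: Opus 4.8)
The plan is to build the two functors directly from the results already proved and then reduce the equivalence to its discrete counterpart \cite[Theorem 2.8]{AN3}, the only genuinely new content being that the relevant bijections are diffeomorphisms. On objects the assignment $\B\mapsto(\D(\B),j,i)$ lands in category (b): Theorem \ref{diagonalisLie} makes $\D(\B)$ a Lie groupoid (the core action is proper by hypothesis), the surjectivity of $\phi$ in Proposition \ref{diagofibr} gives the factorization $\D(\B)=j(\Vc)i(\Hc)$, and Lemma \ref{transersality} supplies transversality at the identities. Conversely $(\D,j,i)\mapsto\square(\D,j,i)$ lands in category (a) by Theorem \ref{associatedslim} together with Lemma \ref{actdougroupdiag}. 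On morphisms the assignments are those of \cite[Theorem 2.8]{AN3}; since a morphism of double Lie groupoids (resp.\ of diagrams) is by definition smooth and the induced morphism on diagonals (resp.\ on $\square$) is a descent (resp.\ a restriction) of smooth maps, smoothness is automatic and functoriality is inherited from the discrete statement.

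It then remains to check that the unit and counit bijections of \cite[Theorem 2.8]{AN3} are isomorphisms in the Lie category. The counit $\bar\Psi\colon\D(\square(\D,j,i))\to\D$ is the easy direction. Writing $\B'=\square(\D,j,i)$, Lemma \ref{actioncore} identifies the underlying manifold of the diagonal with $(\Vc\pfibrado{b}{l}\Hc)/\Ec(\B')$, and the map $\Psi\colon\Vc\pfibrado{b}{l}\Hc\to\D$, $(f,x)\mapsto j(f)i(x)$, is constant on core orbits (using $i(w)=j(h)^{-1}$ for $(h,w)\in\Ec(\B')$), so it descends to the bijection $\bar\Psi\colon[f,x]\mapsto j(f)i(x)$. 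By Lemma \ref{cudradovariedad} the map $\Psi$ is a submersion, and since the projection $\pi$ is a surjective submersion (Theorem \ref{diagonalisLie}), $\bar\Psi$ is a submersion as well. A bijective submersion has $0$-dimensional fibres, hence is a local diffeomorphism, and a bijective local diffeomorphism is a diffeomorphism; thus $\bar\Psi$ is an isomorphism of Lie groupoids.

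For the unit I would treat $\Pi_\B\colon\B\to\square(\D(\B),j,i)$, $A\mapsto\bigl((l(A),b(A)),(t(A),r(A))\bigr)$, which is smooth and, by \cite[Theorem 2.8]{AN3}, bijective (its image being exactly the boxes of $\square(\Vc,\Hc)$ whose perimeter lies in $J_\circledast(\B)$). The aim is again to show it is a submersion and invoke ``bijective submersion $\Rightarrow$ diffeomorphism''. Here I would use that all four corner projections of $\B$ are surjective submersions: this follows from the double source map $\mathbf{S}_\B=(t,l)$ being one, by composing with the horizontal and vertical inversions of $\B$ (which are diffeomorphisms) and the inversions of $\Vc,\Hc$. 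Since $\Psi\circ(l,b)=\Phi\circ(t,r)$ on $\B$, the map $\Pi_\B$ is precisely the map into the transversal fibre product $\square(\D(\B),j,i)=(\Vc\pfibrado{b}{l}\Hc)\pfibrado{\Psi}{\Phi}(\Hc\pfibrado{r}{t}\Vc)$ induced by the two submersions $(l,b)$ and $(t,r)$.

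Surjectivity of $T_A\Pi_\B$ reduces, after lifting the first component through the submersion $(l,b)$, to the statement that $T(t,r)$ carries $\ker T_A(l,b)$ onto $\ker T\Phi$; equivalently, that $(t,r)$ restricts to a diffeomorphism of each $(l,b)$-fibre onto the corresponding fibre of $\Phi$. This is the main obstacle: it is the infinitesimal form of the uniqueness and smooth filling of boxes and is where the geometry of $\B$, rather than just the diagram, must enter. I would resolve it by exhibiting a smooth local inverse of $\Pi_\B$ directly: a local section of $\mathbf{S}_\B$ recovers a box with prescribed top and left, and the prescribed bottom and right are then attained by a \emph{unique} core element, which depends smoothly on the data because the core action is free (slimness, Proposition \ref{propactioncore}) and proper (standing hypothesis), so that the relevant orbit map is a principal bundle projection admitting local sections. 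Granting this, $\Pi_\B$ is a bijective submersion, hence a diffeomorphism, and being a morphism of double Lie groupoids it is an isomorphism in category (a). Naturality of both families is inherited from \cite[Theorem 2.8]{AN3}, so the two composite functors are naturally isomorphic to the identities, which proves the equivalence.
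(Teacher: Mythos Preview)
Your argument follows the paper's strategy exactly at the structural level—reduce to the discrete equivalence \cite[Theorem 2.8]{AN3} and invoke Theorems \ref{associatedslim}, \ref{diagonalisLie} together with Lemmas \ref{transersality}, \ref{actdougroupdiag} to see that the two assignments land in the correct Lie categories—and in fact the paper's own proof stops right there. You go further and verify that the discrete unit and counit are diffeomorphisms, a point the paper leaves implicit. Your counit argument is clean and correct: $\Psi$ is a submersion by Lemma \ref{cudradovariedad}, it factors as $\bar\Psi\circ\pi$ with $\pi$ a surjective submersion (Theorem \ref{diagonalisLie}), so $\bar\Psi$ is a bijective submersion and hence a diffeomorphism.

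There is one slip in the unit argument. You propose to take a local section of $\mathbf S_\B=(t,l)$ and then reach the prescribed bottom and right by acting with a core element. But by Proposition \ref{propactioncore}(b) the core orbit of $A'$ is $\{A:t(A)=t(A'),\ r(A)=r(A')\}$: the core action moves $l$ and $b$ while \emph{fixing} $t$ and $r$, so starting from a box with prescribed $(t,l)$ you cannot adjust $r$ this way. The remedy is immediate and uses only what you have already set up: take instead a local section $\sigma$ of the corner map $(t,r)$, which you correctly observed is also a surjective submersion. Then $A'=\sigma(x,g)$ has the required top and right; the condition $((f,y),(x,g))\in\square(\D(\B),j,i)$ says precisely that $\pi(f,y)=\pi(l(A'),b(A'))$ in $\D(\B)$, so $(f,y)$ and $(l(A'),b(A'))$ lie in the same core orbit, and the smooth division map of the free proper action (the principal-bundle structure you invoked) yields the unique $E$ with $E\rhd(l(A'),b(A'))=(f,y)$. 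The box $E\rightharpoondown A'$ is then the desired preimage, depending smoothly on the data. With this correction your construction of the local inverse goes through and $\Pi_\B$ is a diffeomorphism.
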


\pf The equivalence of categories at the discrete level was
obtained in \cite{AN3}.

If $\B$ is a double Lie groupoid  as in \textit{(a)}, then the
associated $(\Vc, \Hc)$-factorization $(\D(\B), j, i)$ is a Lie
groupoid with $i$ and $j$ transversal to the identities, by
Theorem \ref{diagonalisLie} and Lemma \ref{transersality}, respectively.

Conversely, if we begin with a $(\Vc, \Hc)$-factorization of Lie
groupoids as in \textit{(b)}, then the associated double groupoid
is a slim double Lie groupoid as is required in \textit{(a)}, by
Theorem \ref{associatedslim} and Lemma \ref{actdougroupdiag}. \epf

\end{document}